\newtheorem{thm}{Theorem}[section]
\newtheorem{cor}[thm]{Corollary}
\newtheorem{prop}[thm]{Proposition}
\newtheorem{lem}[thm]{Lemma}
\newcommand{\varep}{\varepsilon}
\theoremstyle{definition}
\newtheorem{defn}[thm]{Definition}
\newtheorem{exas}[thm]{Example}
\newtheorem{rem}[thm]{Remark}
\let\phi\varphi
\begin{document}
\title{Corners of Leavitt path algebras of finite
	graphs are Leavitt path algebras}
\maketitle
\begin{center}
G.~Abrams\footnote{Department of Mathematics, University of Colorado, Colorado Springs,
Colorado, USA. E-mail address: \texttt{abrams@math.uccs.edu}} and 
T.\,G.~Nam\footnote{Institute of Mathematics, VAST, 18 Hoang Quoc Viet, Cau Giay, Hanoi, Vietnam. E-mail address: \texttt{tgnam@math.ac.vn}

\ \ {\bf Acknowledgements}:   
The second author was supported by the Vietnam
Institute for Advanced Study in Mathematics (VIASM).} 
\end{center}

\begin{abstract} We achieve an extremely useful description (up to isomorphism) of the Leavitt path algebra $L_K(E)$ of a finite graph $E$ with coefficients in a field $K$ as a direct sum of  matrix rings over $K$, direct sum with a corner of the Leavitt path algebra $L_K(F)$  of a graph $F$ for which  every regular vertex is the base of a loop.   Moreover, in this case one may transform the graph $E$ into the graph $F$ via some step-by-step procedure, using the ``source elimination" and  ``collapsing" processes.    We use this to establish the main result of the article,  that every nonzero corner of a  Leavitt path algebra of a finite graph is isomorphic to a Leavitt path algebra.  Indeed, we prove a more general result, to wit, that the endomorphism ring of any nonzero finitely generated projective $L_K(E)$-module is isomorphic to the Leavitt path algebra of a graph explicitly constructed from $E$.  Consequently, this yields in particular that every unital $K$-algebra which is Morita equivalent to a Leavitt path algebra is indeed isomorphic to a Leavitt path algebra.
\medskip

\textbf{Mathematics Subject Classifications}: 16S99,  05C25

\textbf{Key words}: Leavitt path algebra; Morita equivalence.
\end{abstract}

\section{Introduction and Preliminaries}
Given a (row-finite) directed graph $E$ and any field $K$, the first author and Aranda Pino in \cite{ap:tlpaoag05}, and independently Ara, Moreno, and Pardo in \cite{amp:nktfga}, introduced the \emph{Leavitt path algebra} $L_K(E)$. Leavitt path algebras generalize the Leavitt algebras $L_K(1, n)$ of \cite{leav:tmtoar}, and also contain many other interesting classes of algebras. In addition, Leavitt path algebras are intimately related to graph $C^*$-algebras (see \cite{r:ga}). During  the past fifteen years, Leavitt path algebras have become a topic of intense investigation by mathematicians from across the mathematical spectrum.  
For a detailed history and overview of  Leavitt path algebras we refer the reader to the survey article  \cite{a:lpatfd}.  

One of the interesting questions in the theory of Leavitt path algebras is to find relationships between graphs $E$ and $F$ such that their corresponding Leavitt path algebras are Morita equivalent. 
In  \cite{alps:fiitcolpa} the first author, Louly, Pardo, and Smith established some basic transformations of graphs which preserve isomorphism or Morita equivalence of the associated Leavitt path algebras.  
Motivated by these, along with the   ``collapsing" process introduced by S\o{}rensen in \cite{sor:gcosga}, we present here another sufficient condition for Morita equivalence between Leavitt path algebras (Theorem \ref{collapsethm}).   This equivalence result in turn provides the vehicle to establish Theorem \ref{decompthm}, which allows us to associate  (modulo some easily-handled direct summands) the Leavitt path algebra of a finite graph  with a (full) corner of the Leavitt path algebra of a graph which is a special type of extension of a ``totally looped" graph (a graph for which every non-sink vertex is the base of a loop).  

 This totally looped property turns out to play an important bridge role in the analysis, as follows.  We establish in  Corollary \ref{haircor}  that any nonzero corner (full or not) of any  Leavitt path algebra over a graph which arises as such a ``strands of hair" extension   of a totally looped graph is isomorphic to a Leavitt path algebra.  We then  use this Corollary, together with Theorem \ref{decompthm},  to establish the (perhaps surprising) generalization of the Corollary to corners of  Leavitt path algebras of all finite graphs (Theorem \ref{cornerthm}; see also Remark \ref{allidempotentsrem}).    
As a consequence, this yields the (seemingly more-general)   Theorem \ref{End(Q)general},   which establishes that    the endomorphism  ring of any nonzero finitely generated projective module over a Leavitt path algebra is again a Leavitt path algebra.   As well, Theorem \ref{cornerthm}  easily yields that  every unital $K$-algebra that is Morita equivalent to a Leavitt path algebra is indeed isomorphic to a Leavitt path algebra.  

We now present a streamlined version of the necessary background ideas.   We refer the reader to \cite{AF} and \cite{Lam} for information about general ring-theoretic constructions, and to  \cite{AAS}  for additional information about Leavitt path algebras.

A (directed) graph $E = (E^0, E^1, s, r)$ 
consists of two disjoint sets $E^0$ and $E^1$, called \emph{vertices} and \emph{edges}
respectively, together with two maps $s, r: E^1 \longrightarrow E^0$.  The
vertices $s(e)$ and $r(e)$ are referred to as the \emph{source} and the \emph{range}
of the edge~$e$, respectively. 
A graph $E$ is \emph{finite} if both sets $E^0$ and $E^1$ are finite.  To streamline the presentation and help illuminate  the key ideas, we will focus on finite graphs throughout this article, although some of the results we establish also hold for more general graphs.  
A vertex~$v$ for which $s^{-1}(v)$ is empty is called a \emph{sink}; a vertex~$v$ for which
$r^{-1}(v)$ is empty is called a \emph{source}; a vertex~$v$ is called an \emph{isolated vertex}
if it is both a source and a sink; and a vertex~$v$ (in a finite graph)  is \emph{regular} if it is not a sink.  
A graph $E$ is said to be \emph{source-free} if it has no sources.   The ``trivial" graph with one vertex and no edges is denoted by $E_{triv}$.  

A \emph{path} $p = e_{1} \cdots e_{n}$ in a graph $E$ is a sequence of edges $e_{1}, \dots, e_{n}$ such that $r(e_{i}) = s(e_{i+1})$ for $i = 1, \dots, n-1$.  In this case, we say that the path~$p$ starts at the vertex $s(p) := s(e_{1})$ and ends at the vertex $r(p) := r(e_{n})$, and has \emph{length} $|p| := n$.  We consider the elements of $E^0$ to be paths of length $0$. We denote by ${\rm Path}(E)$ the set of all paths in $E$. A \emph{cycle based at} $v$ is a path $p = e_{1} \cdots e_{n}$ with $s(p) = r(p) = v$, and for which the vertices $s(e_1), s(e_2), \dots, s(e_n)$ are distinct.  A cycle $c$ is called a \emph{loop} if $|c| =1$.  A graph $E$ is \emph{acyclic} if it has no cycles. 

A subgraph $F$ of a finite graph $E$ is called {\it complete} in case,  for every $v\in F^0$ for which $s^{-1}_F(v) \neq \emptyset$, then $s^{-1}_F(v)  = s^{-1}_E(v)$.  Less formally:  $F$ is complete in case for every vertex $v$ of $F$, if $v$ emits at least one edge in $F$, then all edges which $v$ emits in $E$ are included in $F$.


For vertices $v, w\in E^0$, we write $v\geq w$ if there
exists a path in $E$ from $v$ to $w$, i.e.,  there exists $p\in {\rm Path}(E)$ with $s(p) = v$ and $r(p) =w$. If $v \geq w$ and $v\neq w$, then  there necessarily exists a path $q = e_1 e_2 \cdots e_t  $ from $v$ to $w$    for which the vertices $v = s(e_1), s(e_2), É,  s(e_t), w = r(e_t)$ are distinct.   (An easy induction argument shows that any path from $v$ to $w$ having minimal length will have this property.)   

Let $H$ be a subset of $E^0$. $H$ is called \emph{hereditary} if for all
$v\in H$ and $w\in E^0$, $v\geq w$ implies $w\in H$.
For a subset $S$ of $E^0$, the {\it hereditary closure} $T(S)$ of $S$ is the (hereditary) subset $\{ w \in E^0 \ | \ s \geq w $  for some $ s\in S \}$ of $E^0$.   $H $ is called  \emph{saturated} if whenever $v$ is a regular vertex in $E^0$ with the property that $r(s^{-1}(v)) \subseteq H$, then $v\in H$.

\begin{defn}
We call the graph $E$ {\it totally looped} in case every regular vertex of $E$ is the base of at least one loop.  
\end{defn}



For an arbitrary graph $E = (E^0,E^1,s,r)$
and any  field $K$, the \emph{Leavitt path algebra} $L_{K}(E)$ {\it of the graph}~$E$
\emph{with coefficients in}~$K$ is the $K$-algebra generated
by the sets $E^0$ and $E^1$, together with a set of variables $\{e^{*}\ |\ e\in E^1\}$,
satisfying the following relations for all $v, w\in E^0$ and $e, f\in E^1$:
\begin{itemize}
\item[(1)] $v w = \delta_{v, w} w$;
\item[(2)] $s(e) e = e = e r(e)$ and
$r(e) e^* = e^* = e^*s(e)$;
\item[(3)] $e^* f = \delta_{e, f} r(e)$;
\item[(4)] $v= \sum_{e\in s^{-1}(v)}ee^*$ for any  regular vertex $v$.
\end{itemize}
\begin{rem}\label{oneedge}  We will often use the fact that, by relation (4), if $v \in E^0$ and $s^{-1}(v)$ is a single edge (say  $s^{-1}(v)= \{f\}$), then $ff^* = v$.  
\end{rem}
If the graph $E$ is finite, then $L_K(E)$ is a unital ring having
identity $1=\sum_{v\in E^0}v$ (see, e.g., \cite[Lemma 1.6]{ap:tlpaoag05}). 
For any path $p= e_1e_2\cdots e_n$,  the element $e^*_n\cdots e^*_2e^*_1$ of $L_K(E)$ is denoted by $p^*$.   It can be shown (\cite[Lemma 1.7]{ap:tlpaoag05}) that $L_K(E)$ is  spanned as a $K$-vector space by $\{pq^* \mid p, q\in E^*, r(p) = r(q)\}$.  Indeed,  $L_K(E)$ is a $\mathbb{Z}$-graded $K$-algebra:  
$L_K(E)= \bigoplus_{n\in \mathbb{Z}}L_K(E)_n$,  where for each $n\in \mathbb{Z}$, the degree $n$ component $L_K(E)_n$ is the set \ $ \text{span}_K \{pq^*\mid p, q\in {\rm Path}(E), r(p) = r(q), |p|- |q| = n\}$. 

For any unital  ring $R$, 
 $\mathcal{V}(R)$ denotes the set of isomorphism classes
(denoted by $[P]$) of finitely generated projective left $R$-modules.   
$\mathcal{V}(R)$ is an abelian monoid with operation
$$[P] + [Q] = [P\oplus Q]$$ for any isomorphism classes $[P]$ and $[Q]$.   On the other hand, for any directed graph
$E=(E^0, E^1, s, r)$  the monoid $M_E$ is defined as follows.   Denote by $T$ the free abelian
monoid (written additively) with generators $E^0$, and define relations on $T$ by setting
\begin{center}
	$v = \sum_{e\in s^{-1}(v)}r(e)
	$
\end{center}
for every regular vertex $v\in E^0$.
Let $\sim_{E}$ be the congruence relation on $T$ generated by these relations.
Then $M_E $ is defined to be the quotient monoid $ T/_{\sim_E}$; we denote an element of $M_E$ by $[x]$,
where $x\in T$.  The foundational result about Leavitt path algebras for our work is the following 

\begin{thm}[{\cite[Theorem 3.5]{amp:nktfga}}]\label{AMPthm}
Let $E$ be a finite graph and $K$ any field. Then the map
$[v]\longmapsto [L_{K}(E)v]$ yields an isomorphism of abelian monoids $M_E\cong \mathcal{V}(L_{K}(E))$.
Specifically, these two useful consequences follow immediately.  

(1)  For any regular vertex $v\in E^0$, $L_K(E)v \cong   \bigoplus_{e\in s^{-1}(v)}L_K(E)r(e)$ as left $L_K(E)$-modules.  

(2)  For any nonzero finitely generated projective left $L_K(E)$-module $Q$, there exists a sequence of (not necessarily distinct) vertices $v_1, v_2, \dots, v_\ell$ in $E$ for which $Q \cong \bigoplus_{i=1}^\ell L_K(E)v_i$; restated,  there exists a subset of (distinct) vertices $X$  of $E^0$ and positive integers $\{ n_x  \ | \  x\in X\}$  for which $Q \cong  \bigoplus_{x\in X} n_xL_K(E)x$.  
\end{thm}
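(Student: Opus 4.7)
The plan is to reduce the theorem to Bergman's classical computation of how universal coproduct constructions act on the projective-module monoid: present $L_K(E)$ as the result of a sequence of such universal extensions built on top of $R_0 = K^{E^0}$, each step designed to impose exactly one of the defining relations of $M_E$ in $\mathcal{V}$. Once the monoid isomorphism is in hand, the two listed consequences will fall out essentially for free.

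The first task is to verify consequence (1), which is what allows the proposed map $\phi : M_E \to \mathcal{V}(L_K(E))$, $[v] \mapsto [L_K(E)v]$, to be even well-defined.  For a regular vertex $v$, relation (4) gives $v = \sum_{e \in s^{-1}(v)} ee^*$, and relations (2)--(3) force the idempotents $\{ee^*\}_{e \in s^{-1}(v)}$ to be pairwise orthogonal (since $e^*f = \delta_{e,f} r(e)$).  Hence $L_K(E)v = \bigoplus_{e \in s^{-1}(v)} L_K(E)\,ee^*$ as left $L_K(E)$-modules, and the assignment $x\,ee^* \mapsto x e$ is a left-module isomorphism $L_K(E)\,ee^* \cong L_K(E)\,r(e)$ with inverse $y\,r(e) \mapsto y\,e^*$ (a routine verification using $e^*e = r(e)$ and $r(e)e^* = e^*$).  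This establishes (1) and simultaneously shows that the defining relations of $M_E$ already hold in $\mathcal{V}(L_K(E))$, so $\phi$ descends to a well-defined monoid homomorphism.

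The heart of the matter, and the main obstacle, is proving that $\phi$ is an isomorphism.  I would follow Ara--Moreno--Pardo's strategy: start with $R_0 = K^{E^0}$, whose projective-module monoid is the free abelian monoid on $E^0$, and then for each regular vertex $v$ successively apply Bergman's universal construction that freely adjoins an isomorphism $R\,v \cong \bigoplus_{e \in s^{-1}(v)} R\,r(e)$ between the specified finitely generated projectives; the new generators of this step are precisely a family $\{e, e^* : e \in s^{-1}(v)\}$ satisfying the matrix-unit-like identities dictated by the isomorphism.  Bergman's theorem on such coproducts guarantees that each step modifies $\mathcal{V}$ by adjoining exactly the single relation $[v] = \sum_{e \in s^{-1}(v)} [r(e)]$, and imposes nothing else.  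After performing these extensions once for each regular vertex, the resulting ring $R$ satisfies $\mathcal{V}(R) \cong M_E$ via $[v] \leftrightarrow [R v]$, and its generators and relations are, by construction, precisely those of $L_K(E)$; a universal-property comparison then identifies the two rings and yields the asserted monoid isomorphism.  The technical delicacy lies in verifying Bergman's finite-generation and projectivity hypotheses at each step and in matching the two presentations precisely.

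Consequence (2) then drops out immediately: every nonzero element of $M_E$ is, by definition of the free-abelian-monoid quotient, represented by some finite nonnegative integer combination $\sum_{x \in X} n_x [x]$ of distinct vertex classes, so under $\phi^{-1}$ every nonzero finitely generated projective $L_K(E)$-module $Q$ admits a description $Q \cong \bigoplus_{x \in X} n_x L_K(E) x$ for some finite $X \subseteq E^0$ and positive integers $n_x$.
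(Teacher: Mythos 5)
Your proposal is correct and takes essentially the same route as the paper, which in fact offers no proof of Theorem \ref{AMPthm} but quotes it from Ara--Moreno--Pardo \cite{amp:nktfga}; the Bergman-coproduct strategy you sketch (building $L_K(E)$ from $K^{E^0}$ by universally adjoining the isomorphisms $Rv \cong \bigoplus_{e\in s^{-1}(v)} R\,r(e)$ and invoking \cite[Theorem 6.2]{Bergman}) is precisely the original argument, as the paper itself indicates in Remark \ref{allidempotentsrem}. Your direct verification of consequence (1) via the orthogonal idempotents $ee^*$ with mutually inverse right-multiplication maps, and your derivation of (2) from a representative $\sum_{x\in X} n_x x$ in the free abelian monoid, are both sound.
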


We emphasize that the direct sums indicated in the above Theorem  are external direct sums.  Also, throughout, for a positive integer $n$ and left $R$-module $M$, the direct sum of $n$ copies of $M$ is denoted $nM$.



We collect up in the next result some properties of Leavitt path algebras.  

\begin{prop}\label{Lpaproperties}  Let  $K$ be any field.  

(1)  $L_K(E_{triv}) \cong K$ as $K$-algebras.

(2)  Let $E_1, E_2, \dots, E_n$ be finite graphs.  The disjoint union $E_1 \sqcup E_2 \sqcup \cdots \sqcup E_n $ of graphs is defined as expected.  Then $L_K(E_1 \sqcup E_2 \sqcup \cdots \sqcup E_n ) \cong \bigoplus_{i=1}^n L_K(E_i),$ a ring direct sum as $K$-algebras.

(3)  ({\cite[Lemma 1.6.6]{AAS}}) Let $F$ be a complete subgraph of $E$.  Then $L_K(F)$ is a subalgebra of $L_K(E)$.   
\end{prop}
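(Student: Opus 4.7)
The plan is to dispatch the three parts in order, using the universal property of Leavitt path algebras throughout and citing the AAS reference for the only genuinely delicate point (injectivity in (3)).

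For part (1), the graph $E_{triv}$ has a single vertex $v$ and no edges; relations (2)--(4) are vacuous and (1) reduces to $v^2=v$. Hence $L_K(E_{triv})=K\cdot v$, and the map $K\to L_K(E_{triv})$, $\lambda\mapsto\lambda v$, is a surjective $K$-algebra homomorphism. Injectivity requires that $v\neq 0$ in $L_K(E_{triv})$, which follows from any standard faithful representation of $L_K(E_{triv})$ (for instance, sending $v$ to $\operatorname{id}_K$ in $\operatorname{End}_K(K)$), or equivalently from the Graded Uniqueness Theorem.

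For part (2), I would build the isomorphism in both directions using the universal property. Write $E=E_1\sqcup\cdots\sqcup E_n$. Sending each generator $v\in E_i^0$ (resp. $e\in E_i^1$, resp. $e^*$) to the element of $\bigoplus_{j=1}^n L_K(E_j)$ with $v$ (resp. $e$, resp. $e^*$) in the $i$-th slot and $0$ elsewhere is easily checked to respect all four defining relations, since those relations for vertices and edges in $E_i$ only involve vertices and edges in $E_i$. This yields a $K$-algebra homomorphism $\Phi:L_K(E)\to\bigoplus_i L_K(E_i)$. Conversely, for each $i$ one defines $\Psi_i:L_K(E_i)\to L_K(E)$ by sending each generator of $L_K(E_i)$ to the same symbol viewed as a generator of $L_K(E)$; this is again a well-defined homomorphism by the universal property. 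The crucial orthogonality facts are that for $v\in E_i^0$ and $w\in E_j^0$ with $i\neq j$ one has $vw=0$ in $L_K(E)$ by (1), and similarly for $e\in E_i^1$, $f\in E_j^1$ with $i\neq j$ one has $ef^*=e\,r(e)\,s(f)\,f^*=0$ and $ef=0$, so the images $\Psi_i(L_K(E_i))$ are mutually annihilating two-sided ``blocks''. Consequently the $\Psi_i$ assemble to $\Psi:\bigoplus_i L_K(E_i)\to L_K(E)$, and a check on generators shows $\Phi$ and $\Psi$ are mutually inverse.

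For part (3), I would simply cite \cite[Lemma 1.6.6]{AAS}, but the idea is transparent: define $\iota:L_K(F)\to L_K(E)$ by including generators. Relations (1)--(3) hold automatically because $F^0\subseteq E^0$ and $F^1\subseteq E^1$; relation (4) at a regular vertex $v\in F^0$ reads $v=\sum_{e\in s_F^{-1}(v)}ee^*$, and the completeness hypothesis is exactly what guarantees $s_F^{-1}(v)=s_E^{-1}(v)$ whenever $s_F^{-1}(v)\neq\emptyset$, so (4) holds in $L_K(E)$ as well. The universal property then yields $\iota$, and injectivity is the only non-formal step; this is where the Graded Uniqueness Theorem (applied to the obviously $\mathbb Z$-graded map $\iota$, using that every vertex of $F$ is nonzero in $L_K(E)$) does the work, which is exactly what the AAS reference codifies. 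The main obstacle across the three parts is therefore the injectivity statement in (3), and it is handled entirely by the cited lemma.
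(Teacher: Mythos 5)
Your proposal is correct and is essentially the paper's own treatment: the paper offers no proof of this proposition at all, stating (1) and (2) as standard facts and citing \cite[Lemma 1.6.6]{AAS} for (3), and your universal-property constructions (with the Graded Uniqueness Theorem handling injectivity in (3)) are exactly the standard arguments those sources codify. One small slip in (2): by relation (2) one has $f^* = r(f)f^*$, not $s(f)f^*$, so the orthogonality computation should read $ef^* = e\,r(e)\,r(f)\,f^* = 0$ for $e\in E_i^1$, $f\in E_j^1$, $i\neq j$; the conclusion and the rest of your argument are unaffected.
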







We finish the introductory section by reminding the reader of  some well-known, general ring-theoretic results which will be of great importance in this analysis.   These can be found in \cite[Chapters 1, 2]{AF} and \cite[Sections 17, 18]{Lam}. Throughout,  ``ring" means ``unital ring, with $1_R \neq 0$".    If $f$ is a nonzero idempotent in the ring $R$, then the {\it corner} of $R$ by $f$ is the ring $fRf$.  (However, for notational convenience, we allow the phrase ``direct sum with a corner of ..." to include the situation where the direct summand is the zero ring; see e.g. the Abstract.) An idempotent $f \in R$ is called {\it full} in case $R = RfR$.     Rings $R$ and $S$ are {\it Morita equivalent} in case the category $R-Mod$ of left $R$-modules  and the category $S-Mod$ of left $S$-modules are equivalent.    For a left $R$-module ${}_RM$, we write $R$-endomorphisms of $M$ on the right (i.e., the side opposite the scalars); so for $f,g \in {\rm End}_R(M)$,  $(m)fg$ means ``first $f$, then $g$".  

\begin{prop}\label{End}   Let $R$ be a unital ring.

(1)  Let $e,f$  be idempotents in $R$.  Then ${\rm Hom}_R(Re,Rf) \cong eRf$ as abelian groups. 
 
(2) Suppose $P \cong  \bigoplus_{i=1}^n P_i$ as left $R$-modules.  Then 
$${\rm End}_R(P) \cong \biggl( {\rm Hom}_R(P_i, P_j)\biggr),$$
the ring of $n\times n$ matrices for which the entry in the $i$-th row, $j$-th column is an element of ${\rm Hom}_R(P_i, P_j)$, for all $1\leq i,j \leq n$.    In particular, if $P\cong  \bigoplus_{i=1}^n Re_i$, an external direct sum of the left $R$-modules $Re_i$ for idempotents $e_i$, then
$${\rm End}_R(P) \cong \biggl(e_iRe_j\biggr),$$
the ring of $n\times n$ matrices for which the entry in the $i$-th row, $j$-th column is an element of $e_iRe_j$, for all $1\leq i,j \leq n$. 
\end{prop}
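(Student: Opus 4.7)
For part (1), the plan is to define a map $\Phi\colon \on{Hom}_R(Re,Rf)\longrightarrow eRf$ by $\Phi(\alpha) = (e)\alpha$, and show it is an isomorphism of abelian groups. The image lies in $Rf$ since $\alpha$ takes values in $Rf$, and since $e=e^2$ and $\alpha$ is $R$-linear, $(e)\alpha = (e\cdot e)\alpha = e\cdot(e)\alpha$, so in fact $(e)\alpha\in eRf$. Additivity of $\Phi$ is immediate from the pointwise abelian group structure on $\on{Hom}_R(Re,Rf)$. For the inverse, given $x\in eRf$ I would define $\alpha_x\colon Re\to Rf$ by $(y)\alpha_x = yx$; this lies in $Rf$ because $x=xf$, is $R$-linear (scalars commute past right multiplication), and is an $R$-module homomorphism $Re\to Rf$. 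The two assignments are mutually inverse: $\Phi(\alpha_x)=(e)\alpha_x=ex=x$ (using $x\in eRf$), while for any $\alpha\in\on{Hom}_R(Re,Rf)$ and $y\in Re$ (so $y=ye$) one has $(y)\alpha_{\Phi(\alpha)} = y\cdot(e)\alpha = (ye)\alpha = (y)\alpha$.

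For part (2), the strategy is the standard Peirce-type decomposition via the canonical inclusions $\iota_j\colon P_j\hookrightarrow P$ and projections $\pi_i\colon P\twoheadrightarrow P_i$ associated with the decomposition $P=\bigoplus_{i=1}^n P_i$. These satisfy $\pi_i\iota_j=\delta_{ij}\,\on{id}_{P_i}$ and $\sum_{i=1}^n \iota_i\pi_i = \on{id}_P$ (using the right-side convention consistently). To an endomorphism $\alpha\in\on{End}_R(P)$ I associate the matrix whose $(i,j)$-entry is $\alpha_{ij}:=\iota_i\alpha\pi_j\in\on{Hom}_R(P_i,P_j)$. I would then verify that this assignment is an abelian group isomorphism with inverse sending a matrix $(\beta_{ij})$ to $\sum_{i,j}\pi_i\beta_{ij}\iota_j$, and — crucially — that under the right-action convention $(m)fg$ meaning ``first $f$, then $g$'', composition of endomorphisms corresponds to the usual matrix product $(\alpha\beta)_{ij}=\sum_k \alpha_{ik}\beta_{kj}$. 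The special case is then immediate: if $P_i=Re_i$, then by part (1) each $\on{Hom}_R(Re_i,Re_j)$ may be replaced by $e_iRe_j$, yielding the described matrix ring.

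The only delicate point is orientation: with endomorphisms on the right and the convention $(m)fg=((m)f)g$, the composition convention matches ordinary matrix multiplication (rather than its opposite), so everything assembles into a genuine ring isomorphism. I expect the main obstacle will be no more than a bookkeeping check that multiplying matrices in the usual order, combined with right-composition of endomorphisms, produces the same element of $\on{End}_R(P)$; the idempotent identities $\pi_i\iota_j=\delta_{ij}\on{id}_{P_i}$ and $\sum_i \iota_i\pi_i=\on{id}_P$ reduce this to a one-line calculation.
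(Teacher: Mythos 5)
Your proof is correct and is precisely the standard argument: the paper gives no proof of this proposition, deferring to \cite[Chapters 1, 2]{AF} and \cite[Sections 17, 18]{Lam}, where the same computation for part (1) and the same Peirce-style matrix decomposition for part (2) appear, including your key observation that the right-side convention $(m)fg$ makes composition match ordinary (untransposed) matrix multiplication. One notational slip worth fixing: under the right-side convention the identities you state are mis-oriented --- they should read $\iota_i\pi_j=\delta_{ij}\,\on{id}_{P_i}$ and $\sum_{i=1}^n\pi_i\iota_i=\on{id}_P$ (as written, the individual terms of $\sum_i\iota_i\pi_i$ are maps on $P_i$, not on $P$) --- but your actual definitions $\alpha_{ij}=\iota_i\alpha\pi_j$ and the inverse $(\beta_{ij})\mapsto\sum_{i,j}\pi_i\beta_{ij}\iota_j$ already use the correct orientation, and with the corrected identities the verification $(\alpha\beta)_{ij}=\sum_k\alpha_{ik}\beta_{kj}$ is the one-line calculation you anticipate.
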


\begin{thm}\label{Moritaresults}    Let $R$ and $S$ be unital rings.  

(1)  ``Morita's Theorem":  
 $R$ is Morita equivalent to $S$ if and only if there exist a positive integer $n$ and a full idempotent $f\in M_n(R)$ such that $S\cong f M_n(R)f$.

(2)  Suppose $R$ decomposes as a ring direct sum $R = \bigoplus_{i=1}^n R_i$.  Then $S$ is Morita equivalent to $R$ if and only if $S$ decomposes as a ring direct sum $S = \bigoplus_{i=1}^n S_i$ where $R_i$ is Morita equivalent to $S_i$ for all $1 \leq i \leq n$.  

(3)  Let $Q$ be a nonzero finitely generated projective left $R$-module, and suppose that $Q$ is generated by $n$ elements.  Then ${\rm End}_R(Q) \cong {\rm End}_{M_n(R)}(M_n(R)q)$ for some idempotent $q \in M_n(R)$.  
\end{thm}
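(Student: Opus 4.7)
The plan is to handle the three parts in sequence; all are classical facts of Morita theory, ultimately reducing to the bimodule/progenerator formalism for category equivalences.

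For (1), the essential step is to extract from an abstract equivalence $F : R\text{-Mod} \to S\text{-Mod}$ concrete bimodule data. I would set $P := F({}_RR)$, which is a left $S$-module, and use the canonical identification $R \cong {\rm End}_R({}_RR)^{\rm op}$, transported through $F$, to equip $P$ with a compatible right $R$-action, making $P$ an $(S,R)$-bimodule. Since $F$ is additive and preserves colimits, an Eilenberg--Watts-style comparison on the generator ${}_RR$ identifies $F$ with $P \otimes_R -$, realizing $P$ as a progenerator of $R\text{-Mod}$ with $S \cong {\rm End}_R(P)^{\rm op}$. Finite generation of $P$ produces a surjection $R^n \twoheadrightarrow P$, which splits by projectivity, so $P$ corresponds to an idempotent $f \in {\rm End}_R(R^n) \cong M_n(R)$ with $P \cong R^n f$ and ${\rm End}_R(P) \cong f M_n(R) f$; generator-hood of $P$ translates into $M_n(R) f M_n(R) = M_n(R)$, i.e., fullness of $f$. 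Conversely, if $f \in M_n(R)$ is full, then $M_n(R) f$ is a progenerator of $M_n(R)\text{-Mod}$, yielding a Morita equivalence between $M_n(R)$ and $f M_n(R) f \cong S$; chaining with the standard Morita equivalence between $R$ and $M_n(R)$ (via the progenerator $R^n$) gives the full result.

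For (2), I would exploit that a ring direct sum $R = \bigoplus_{i=1}^n R_i$ corresponds to a family of orthogonal central idempotents $\{z_1,\dots,z_n\}$ summing to $1_R$, inducing a category equivalence $R\text{-Mod} \simeq \prod_{i=1}^n R_i\text{-Mod}$ via $M \mapsto (z_i M)$. Any Morita equivalence between $R$ and $S$ transports the decomposition to $S\text{-Mod}$; because the center is an invariant of the module category (via $Z(R) \cong {\rm End}(\mathrm{id}_{R\text{-Mod}})$), the $z_i$ correspond to central idempotents in $S$, producing a ring decomposition $S = \bigoplus_{i=1}^n S_i$ with $S_i$ Morita equivalent to $R_i$. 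The converse is an immediate direct product of equivalences.

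For (3), pick $n$ generators of $Q$ to form a surjection $R^n \twoheadrightarrow Q$, and split it by projectivity to obtain $R^n \cong Q \oplus Q'$. The projection of $R^n$ onto $Q$ along $Q'$ is an idempotent $q \in {\rm End}_R(R^n) \cong M_n(R)$, and two applications of Proposition~\ref{End}(1)---once to realize ${\rm End}_R(Q) \cong q M_n(R) q$, and once in the ring $M_n(R)$ with $e = f = q$ to realize ${\rm End}_{M_n(R)}(M_n(R)q) \cong q M_n(R) q$---combine to give the stated isomorphism. The main obstacle in the whole argument is the forward direction of (1), the Eilenberg--Watts step of producing a genuine bimodule out of an abstract equivalence; once that is in hand, both the fullness translation and the remaining parts reduce to idempotent bookkeeping.
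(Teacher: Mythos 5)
The paper offers no proof of Theorem~\ref{Moritaresults} at all --- it is stated as standard background with a citation to \cite[Chapters 1, 2]{AF} and \cite[Sections 17, 18]{Lam} --- and your argument is precisely the classical one found in those references: the progenerator/Eilenberg--Watts formalism and idempotent translation for (1), central idempotents together with $Z(R) \cong {\rm End}(\mathrm{id}_{R\text{-Mod}})$ for (2), and idempotent splitting combined with Proposition~\ref{End} for (3), so your proposal is correct and takes essentially the same route the paper delegates to its sources. The only caveat is a minor left/right bookkeeping slip in (1): the bimodule $P = F({}_RR)$ is a progenerator as a \emph{right} $R$-module with $S \cong {\rm End}(P_R)$ (equivalently, one may work with $Q = G({}_SS)$ as a left $R$-progenerator satisfying ${\rm End}_R(Q) \cong S$), but this does not affect the structure or validity of the argument.
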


\section{Collapsing  at a regular vertex that is not the base of a loop}
In this section 
we achieve (Theorem \ref{decompthm})   an extremely useful description (up to isomorphism) of the Leavitt path algebra of any finite graph as a direct sum of  matrix rings over the coefficient field, direct sum with a corner of a Leavitt path algebra of a graph which is a special type of extension of a totally looped graph.   

Here is the strategy.  
We begin  by establishing a Morita equivalence property which is similar to an analogous property of graph $C^*$-algebras. The $C^*$-result, called Move (R), was
shown in \cite[Proposition 3.2]{sor:gcosga}; it followed as a special case of the result \cite[Theorem 3.1]{cr06:csameogca}.  Move (R) applies only to very restricted configurations of vertices and edges in a graph.  By subsequently applying two additional previously-studied graph transformations (first ``in-split", then in turn  ``out-split"), both of which preserve Morita equivalence of the associated Leavitt path algebras, we are able to eliminate the restrictions on the configurations and achieve a significant generalization of Move (R), called ``collapsing"; this is the gist of Theorem \ref{collapsethm}.   We then use Theorem \ref{collapsethm} to establish Theorem \ref{decompthm}.   


\begin{defn}\label{moveRdef}
Let $E = (E^0, E^1, r, s)$ be a finite graph.   Let $w\in E^0$ be a vertex such that $w$ emits exactly one edge (call it $f$), and $f$ is not a loop, and such that $w$ receives edges from at most one vertex.  That is,  $|s^{-1}(w)| = 1$ (but $r(s^{-1}(w)) \neq \{w\})$, and either $w$ is a source vertex  or $|s(r^{-1}(w))|=1 $. 
If $w$ is not a source, then denote by  $v$  the only vertex that emits to $w$. 
Define the ``Move (R) at $w$"  graph
$G = (G^0, G^1, r_G, s_G)$ by setting 
\[   G^0 = E^0\setminus \{w\},  \ \ \ \ G^1 = (E^1\setminus (r^{-1}(w) \cup \{f\})) \cup \{[ef] \mid e\in r^{-1}(w)\},\] where range and source maps extend those of $E$, and satisfy $r_G([ef]) = r(f)$ and $s_G([ef]) = s(e) = v$.   (Note that, in case $w$ is a source, then  $G^1$ is simply $E^1 \setminus \{f\}$.)
\end{defn}

\begin{rem}
We note that if there is a loop based at $w$, then the Move (R) construction at $w$ does not yield a well-defined graph, because in that case the range function $r_G$ would be undefined at  edges of the form $[ef]$ (since that vertex $w$ has been  eliminated).  
\end{rem}

\begin{prop}\label{moveRprop}
Let $K$ be any field.  Let $E$ be a finite graph, let $w\in E^0$ be a vertex of the type described in Definition \ref{moveRdef}, and let $G$ be the corresponding Move (R) at $w$  graph.    Then $L_K(E)$ is Morita equivalent to $L_K(G).$
\end{prop}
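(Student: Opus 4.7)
The plan is to exhibit a full idempotent $p \in L_K(E)$ whose corner is isomorphic to $L_K(G)$; Morita equivalence then follows from Theorem~\ref{Moritaresults}(1). I will take $p := 1_{L_K(E)} - w = \sum_{v \in G^0} v$, which is an idempotent by relation~(1). Fullness of $p$ is immediate from Remark~\ref{oneedge}: the hypothesis $s^{-1}(w) = \{f\}$ gives $ff^* = w$, and since $f$ is not a loop we have $r(f) \in G^0$, whence $w = f \cdot r(f) \cdot f^* \in L_K(E)\, p\, L_K(E)$; every other vertex lies in $p$ trivially, so $L_K(E)pL_K(E) = L_K(E)$.

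Next I would define a $K$-algebra homomorphism $\phi \colon L_K(G) \to pL_K(E)p$ on generators by $v \mapsto v$ for $v \in G^0$, $e \mapsto e$ for old edges $e \in E^1 \setminus (r^{-1}(w) \cup \{f\})$, $[ef] \mapsto ef$ for $e \in r^{-1}(w)$, and similarly on duals. Verification that $\phi$ respects the defining relations reduces to short computations, the most instructive of which are
\[ [ef]^*[e'f] \mapsto f^* e^* e' f = \delta_{e,e'}\, f^* w f = \delta_{e,e'}\, r(f) = \delta_{e,e'}\, r_G([ef]) \]
and the CK4 identity at the vertex $v$ (as in Definition~\ref{moveRdef}) emitting to $w$, where
\[ \sum_{e' \in s_G^{-1}(v)} e'(e')^* \ \longmapsto \!\!\sum_{\substack{e' \in s_E^{-1}(v) \\ r(e') \neq w}}\!\! e'(e')^* \ +\ \sum_{e \in r^{-1}(w)}\!\! e f f^* e^* \ =\ \sum_{e' \in s_E^{-1}(v)} e'(e')^* \ =\ v, \]
using $ff^* = w$ and $ew = e$ for $e \in r^{-1}(w)$ (together with $r^{-1}(w) \subseteq s_E^{-1}(v)$, which follows from $|s(r^{-1}(w))| = 1$). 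The universal property then produces $\phi$.

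For surjectivity, any element of $pL_K(E)p$ is a $K$-linear combination of monomials $\alpha\beta^*$ with $r(\alpha) = r(\beta)$ and $s(\alpha), s(\beta) \in G^0$; if $r(\alpha) = w$, I rewrite $\alpha\beta^* = (\alpha f)(\beta f)^*$ using $ff^* = w$ so that neither path ends at $w$. In any path whose endpoints avoid $w$, an occurrence of the edge $f$ must be preceded by some $e \in r^{-1}(w)$, so each consecutive pair $ef$ can be consolidated into $\phi([ef])$, exhibiting $\alpha$ and $\beta$ as images under $\phi$ of paths in $G$.

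The main obstacle is injectivity of $\phi$, since the standard grading on $L_K(G)$ (where $[ef]$ has degree~$1$) does not match the standard grading on $L_K(E)$ (where $\phi([ef]) = ef$ has degree~$2$). I would resolve this by regrading $L_K(E)$: set $\deg(f) = \deg(f^*) = 0$ while keeping $\deg(e) = 1$ and $\deg(e^*) = -1$ for every other edge. All defining relations remain homogeneous (the only subtle case, CK4 at $w$, becomes $w = ff^*$ in degree $0$ on both sides), so this is a bona fide $\mathbb{Z}$-grading inherited by $pL_K(E)p$. With respect to this grading $\phi$ is graded and sends each vertex of $G$ to a nonzero element, so the Graded Uniqueness Theorem for Leavitt path algebras yields injectivity. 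Combining $L_K(G) \cong pL_K(E)p$ with fullness of $p$, Theorem~\ref{Moritaresults}(1) completes the proof.
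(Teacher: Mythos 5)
Your proof is correct, and its skeleton coincides with the paper's: you construct the identical homomorphism on generators ($u\mapsto u$, $[ef]\mapsto ef$, $[ef]^*\mapsto f^*e^*$), identify its image with the corner at the idempotent $1-w$, prove fullness via $w=ff^*$ and $r(f)\neq w$, and finish with Theorem \ref{Moritaresults}(1). The genuine divergence is the injectivity step, and your route is arguably cleaner. The paper cannot apply the Graded Uniqueness Theorem with the standard gradings precisely because $\psi([ef])=ef$ has degree $2$, so it falls back on the Reduction Theorem: a nonzero kernel element is massaged into either a vertex or a Laurent polynomial in a cycle $c$ of $G$, and a case analysis (using that a cycle of $G$ passes through at most one edge of the form $[ef]$, since all such edges have source $v$) shows $\psi(c)$ is again a cycle of $E$, whence the standard $\mathbb{Z}$-grading of $L_K(E)$ forbids the relation. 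Your regrading trick --- declaring $\deg f=\deg f^*=0$ --- restores gradedness of the map and lets you invoke \cite[Theorem 4.8]{tomf:utaisflpa} exactly as the paper itself does in the proof of Proposition \ref{insplitprop}; this is legitimate because the defining relators of a Leavitt path algebra are homogeneous for \emph{any} integer degree assignment on the edges (each $ee^*$ has degree $0$ regardless of the assignment, so in fact no CK4 relation is even a subtle case), and hence the quotient of the free algebra inherits the grading. What each approach buys: yours eliminates the Reduction Theorem and the cycle case analysis at the cost of one standard observation about nonstandard gradings; the paper's argument stays entirely within the standard graded structure used elsewhere in the text. Your surjectivity argument is likewise a correct compression of the paper's three-case analysis: the single rewriting $\alpha\beta^*=(\alpha f)(\beta f)^*$ when $r(\alpha)=r(\beta)=w$, combined with the observation that in a path whose endpoints avoid $w$ every occurrence of $f$ is immediately preceded by an edge of $r^{-1}(w)$ (and every non-terminal edge of $r^{-1}(w)$ is immediately followed by $f$, the unique edge out of $w$), subsumes Cases 1--3 of the paper's proof.
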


\begin{proof} We construct a $K$-algebra homomorphism $$\psi: L_K(G)\longrightarrow
L_K(E)$$ given on the generators of the free $K$-algebra $K\langle u, g, g^*\mid u\in G^0, g\in G^1\rangle$ as follows: 
\begin{equation*}
\psi(u) = u,    \ \ \  \ \ 
\psi(g)=  \left\{
\begin{array}{lcl}
ef&  & \text{if } g = [ef]\ (e\in r^{-1}(w)) , \\
g&  & \text{otherwise \ \ }%
\end{array}%
\right.
\end{equation*}%
and \begin{equation*}
\psi(g^*)=  \left\{
\begin{array}{lcl}
f^*e^*&  & \text{if } g = [ef]\ (e\in r^{-1}(w)) , \\
g^*&  & \text{otherwise. \ \ }%
\end{array}%
\right.
\end{equation*}%
To ensure that the map $\psi$ induces  a $K$-algebra  homomorphism from $L_K(G)$ to $L_K(E)$, we must verify that all elements of the following forms:
\medskip

$uu' - \delta_{u, u'}u$ for all $u, u'\in G^0$,
\smallskip

$s_{G}(g)g - g$ and $g-gr_{G}(g)$ for all $g\in G^1$,
\smallskip

$r_{G}(g)g^*-g^*$ and $g^*-g^*s_{G}(g)$ for all $g\in G^1$,
\smallskip

$g^*h - \delta_{g, h}r_{G}(g)$ for all $g, h\in G^1$,
\smallskip

$u - \sum_{g\in s_{G}^{-1}(u)}gg^*$ for a regular vertex $u\in G^0$\\

\noindent
are in the kernel of $\psi$. Here we verify only the last one, since the first four  can be established easily.    Let $u\in G^0$ be a regular vertex. If $w$ is a source then $s_{G}^{-1}(u) = s^{-1}(u)$, and so $\psi(u - \sum_{g\in s_{G}^{-1}(u)}gg^*)= u - \sum_{g\in s^{-1}(u)}gg^* = 0$, as desired. Suppose $w$ is not a source. Then by hypothesis $w$ receives from only one vertex, $v$ say. If $u \neq v$ then the statement is proved by a similar argument to that given above. So consider the case when $u = v$. We note that $ff^* = w$ by Remark \ref{oneedge}, and
$s_G^{-1}(v) = (s^{-1}(v)\setminus r^{-1}(w)) \sqcup \{[ef]\mid e\in r^{-1}(w)\}.$  Hence
\begin{equation*}
\begin{array}{rcl}
\psi(v - \sum_{g\in s_{G}^{-1}(v)}gg^*) &=& v - \sum_{g\in s^{-1}(v)\setminus r^{-1}(w)}gg^* - \sum_{e\in r^{-1}(w)}(ef)(f^*e^*)\\
&=& v - \sum_{g\in s^{-1}(v)\setminus r^{-1}(w)}gg^* - \sum_{e\in r^{-1}(w)}ee^*\\
&=& v - \sum_{g\in s^{-1}(v)}gg^* = 0.
\end{array}
\end{equation*}

We next prove that $\psi: L_K(G) \to L_K(E)$ is injective. To the contrary, suppose 
there exists a nonzero element $x\in \ker(\psi)$. Then, by the Reduction Theorem (see, e.g., \cite[Theorem 2.2.11]{AAS}), there exist $a, b\in L_K(G)$ such that 
either $axb = u \neq 0$ for some $u\in G^0$, or $axb = p(c)\neq 0,$ where $c$ is a cycle in $G$ and $p(x)$  is a nonzero polynomial in
$K[x, x^{-1}]$. 

In the first case, since $axb\in \ker(\psi)$, this would imply that $u =\psi(u) = 0$ in $L_K(E)$; but each vertex is well-known to be a nonzero element inside the Leavitt path algebra, a contradiction.     

So we are in the second case:   there exists a cycle $c$ in $G$ such that $axb = \sum^m_{i= -n}k_ic^i$, where $k_i\in K$ and we interpret $c^i$ as $(c^*)^{-i}$ for negative $i$, and we interpret $c^0$ as $u := s(c)$.
We then have $\sum^m_{i= -n}k_i\psi(c)^i= \psi(axb) = 0$ in $L_K(E)$.
We write $c= g_1 \cdots g_m$, where $g_i\in G^1$. If  none of the $g_i$'s are in $\{[ef] \mid e\in r^{-1}(w)\}$, then $\psi(c) = \psi(g_1)\cdots \psi(g_m) = g_1 \cdots g_m$, so $\psi(c)$ is also a cycle in $E$. Otherwise, there
exists $1\leq k\leq m$ such that $g_k = [ef]$ for some $e\in r^{-1}(w)$. We then have that $g_i\in E^1\setminus (r^{-1}(w) \cup \{f\})$ for all $i \neq k$, since $c$ is a cycle. It shows that $\psi(c) = \psi(g_1...g_{k-1}[ef]g_{k+1}...g_{m}) = g_1...g_{k-1}efg_{k+1}...g_{m}$, so $\psi(c)$ is a cycle in $E$. Thus, in any case $\psi(c)$ is a cycle in $E$.  But the $\mathbb{Z}$-grading in $L_K(E)$ shows that an equation of the type $\sum^m_{i= -n}k_i\psi(c)^i= \psi(axb) = 0$ cannot hold in $L_K(E)$. This shows that $\psi$ is injective, so $L_K(G)$ is isomorphic to $Im(\psi)$.

Let $\epsilon =\psi(1_{L_K(G)}) = \sum_{u\in E^0, u\neq w}u$. We claim that $Im(\psi) = \epsilon L_K(E)\epsilon$.  Clearly $Im(\psi) \subseteq \epsilon L_K(E)\epsilon$. To show the other inclusion, we only need to verify that for all nonzero monomials   
$x\in \epsilon L_K(E)\epsilon$,
$\psi^{-1}(x) \neq \emptyset$. We can express such a monomial $x$ of the form $x = pq^*$, where $p = g_1\cdots g_m$ and $q= f_1 \cdots f_n$ are  paths in $E$ such that $r(p) = r(q)$ and $s(p), s(q)\neq w$. Consider the following three cases. (We note in advance that the last two possibilities of Case 1 and Case 3 cannot occur when $w$ is a source.)

\emph{Case} 1. $r(p) = r(q) = r(f)$. If both $g_m$ and $f_n$ are different from $f$, then $g_i$ and $h_j\in E^1\setminus (r^{-1}(w) \cup \{f\})$, so $x = pq^*\in L_K(G)$ and $\psi(x) = x.$ If $g_m = f$ then $f_n\neq f$ (otherwise, since $ff^* = w$, we get that $x = g_1...g_{m-1}ff^*f^*_{n-1}...f^*_1 = g_1...g_{m-1}f^*_{n-1}...f^*_1$ and $r(g_1...g_{m-1}) = r(f_1...f_{n-1})\neq r(f)$, a contradiction). We then immediately obtain that $m\geq 2$, $g_{m-1}\in r^{-1}(w)$ (since $s(p) = s(g_1)\neq w$), and $g_i, f_j\in E^1\setminus (r^{-1}(w) \cup \{f\})$ for all $1\leq i\leq m-2$ and $1\leq j\leq n$. Set $\alpha = g_1...g_{m-2}[g_{m-1}f]f^*_{n}...f^*_1\in L_K(G)$. We have that $\psi(\alpha) = g_1...g_{m-2}g_{m-1}ff^*_{n}...f^*_1 = x.$ 
 If $f_n = f$ then $g_m$ is different from $f$ (otherwise, since $ff^* =w$, $x = g_1...g_{m-1}ff^*f^*_{n-1}...f^*_1 = g_1...g_{m-1}f^*_{n-1}...f^*_1$ and $r (g_1...g_{m-1}) = r(f_1...f_{n-1}) \neq r(f)$, a contradiction). We then have that, for $n \geq 2, f_{n-1}$ is in $r^{-1}(w)$ (since $s(q) = s(f_1) \neq w$), and $g_i, f_j$ are not in $r^{-1}(w) \cup \{f\}$ for all $1 \leq i \leq m$ and $1 \leq j \leq n-2$. Define $ \beta := g_1 ... g_m [f_{n-1}f]^*f^*_{n-2}...f^*_1 \in L_K(G)$. Then we have $\phi(\beta) = g_1 ... g_m f^*f^*_{n-1}f^*_{n-2}...f^*_1= x$. 

\emph{Case} 2. $r(p) = r(q) \notin \{r(f), w\}$. We immediately get that both $g_i$ and $h_j\in E^1\setminus (r^{-1}(w) \cup \{f\})$ for all $i, j$, so $x = pq^*\in L_K(G)$ and $\psi(x) = x.$

\emph{Case} 3. $r(p) = r(q) = w$. We have that $g_m$ and $f_n$ are in $r^{-1}(w)$ and $g_i\ (1\leq i\leq m-1), f_j\ (1\leq j\leq n-1) \in E^1\setminus (r^{-1}(w) \cup \{f\})$, and so
$$x = g_1...g_{m}f^*_{n}...f^*_1
 = g_1...g_{m}.w.f^*_{n}...f^*_1 = 
g_1...g_{m}ff^*f^*_{n}...f^*_1=\psi(\beta)$$
for $\beta := g_1...g_{m-1}[g_nf][f_nf]^*f^*_{n-1}...f^*_1\in L_K(G)$. 

In any case we always have $\psi^{-1}(x) \neq \emptyset$, so $Im(\psi) = \epsilon L_K(E)\epsilon$, thus showing that $L_K(G)$ is isomorphic to $\epsilon L_K(E)\epsilon$.

To establish the Morita equivalence, we  show that $L_K(E) = L_K(E)\epsilon L_K(E)$ (see the $n=1$ case of Theorem \ref{Moritaresults}(1)). It is enough to show that $w$ is in $ L_K(E)\epsilon L_K(E)$. Since $r(f)$ is in the ideal $L_K(E)\epsilon L_K(E)$, the edge $f$ is in $ L_K(E)\epsilon L_K(E)$. Then $w = ff^*\in  L_K(E)\epsilon L_K(E)$. This proves that $L_K(E)\epsilon L_K(E) = L_K(E)$. Hence $L_K(G)$ is Morita equivalent to $L_K(E)$, finishing the proof.	
\end{proof}


The key Morita equivalence result for us (Theorem \ref{collapsethm}) is inspired by \cite[Theorem 5.2]{sor:gcosga}. To achieve it, we  show that  Move (R) may be applied at vertices that are more general than those given in Definition \ref{moveRdef}, and that the corresponding Leavitt path algebras are Morita equivalent.  
Following \cite{sor:gcosga}, we call this generalization of Move (R) a  ``collapse".

\begin{defn}\label{collapsedef}  [Collapse at a regular vertex which is not the base of a loop]
Let $E = (E^0, E^1, r, s)$ be a finite graph, and let $v\in E^0$ be a regular vertex which is not the base of a loop. Define the ``collapse at $v$"  graph
$G = (G^0, G^1, r_G, s_G)$ by setting 
\[  G^0 = E^0\setminus \{v\},  \ \ \ 
   G^1 = (E^1\setminus (r^{-1}(v) \cup s^{-1}(v))) \cup \{[ef] \mid e\in r^{-1}(v),\ f\in s^{-1}(v)\},\] where range and source maps extend those of $E$, and satisfy $r_G([ef]) = r(f)$ and $s_G([ef]) = s(e)$.   (We note that, in case $v$ is a source, then  $G^1$ is simply $E^1 \setminus s^{-1}(v)$.)
\end{defn}

\begin{rem}  As with Move (R), the requirement that there be no loop based at the collapsing vertex is necessary so that the collapsing process gives a well-defined graph.
\end{rem}

Specific examples of this collapsing process are given below in Example \ref{collapsenotunique}.

\medskip

We extend Proposition \ref{moveRprop} in two stages.  In the first stage, we show that the requirement that the collapsing vertex receive edges from at most one vertex can be eliminated (Proposition \ref{collapseoneedgeprop}).

\begin{defn}[{\cite[Definitions 1.9]{alps:fiitcolpa}}: the ``in-split" graph]\label{insplitdef}
Let $E = (E^0, E^1, r, s)$ be a graph and $v\in E^0$ a vertex that is not a source. Partition $r^{-1}(v)$ into a finite number, say $n$, of disjoint nonempty subsets $\mathcal{E}_1, \mathcal{E}_2, ..., \mathcal{E}_n$. We form the \textit{in-split graph} $E_{is} = (E^0_{is}, E^1_{is}, r_{is}, s_{is})$ from $E$ using the partition $\{\mathcal{E}_i\mid i = 1, ..., n\}$ as follows: $E^0_{is} = (E^0\setminus \{v\}) \cup \{v_1, v_2, ..., v_n\}$,

\begin{center}
$E^1_{is} = \{e_1, e_2, ..., e_n\mid e\in E^1, s(e) = v\} \cup \{f\mid f\in E^1\setminus s^{-1}(v)\}$,
\end{center} 
and define $r_{is}$, $s_{is}: E_{is}^1 \longrightarrow E_{is}^0$ by setting 
$s_{is}(e_j) = v_j$,  $s_{is}(f) = s(f)$, 	and

\begin{equation*}
r_{is}(x)=  \left\{
\begin{array}{lcl}
r(f)&  & \text{if } x = f\notin r^{-1}(v)\\
v_i&  & \text{if } x =f\in r^{-1}(v) \text{ and } f\in \mathcal{E}_i\\
r(e)&  & \text{if } x = e_j \text{ and } e\notin r^{-1}(v)\\
v_i&  & \text{if } x = e_j,\ e\in r^{-1}(v) \text{ and } e\in \mathcal{E}_i
\end{array}%
.
\right.
\end{equation*}%

\end{defn}

\begin{prop}[essentially {\cite[Proposition 1.11 and Corollary 3.9]{alps:fiitcolpa}}] \label{insplitprop}
Let $K$ be any field. Let $E$ be a finite graph and $v\in E^0$ a vertex that is not a source. Then $L_K(E)$ is Morita equivalent to $L_K(E_{is})$.
\end{prop}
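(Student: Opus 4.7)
The plan is to follow the strategy of Abrams-Louly-Pardo-Smith: realize $L_K(E_{is})$ as a full corner of a matrix algebra over $L_K(E)$, and then invoke Morita's Theorem (Theorem \ref{Moritaresults}(1)). Concretely, using the universal property of $L_K(E_{is})$, I would construct a $K$-algebra homomorphism $\phi: L_K(E_{is}) \to M_n(L_K(E))$, where $n$ is the number of blocks in the given partition of $r^{-1}(v)$.

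Let $\varepsilon_{ij}$ denote the standard matrix units in $M_n(K)$. The assignments are: a non-split vertex $u \in E^0 \setminus \{v\}$ goes to $\phi(u) = u \cdot \varepsilon_{11}$; each split copy $v_i$ goes to $\phi(v_i) = v \cdot \varepsilon_{ii}$; for a split edge $g_j$ coming from $g \in s^{-1}(v)$, set $\phi(g_j) = g \cdot \varepsilon_{jk}$, where the column index $k = 1$ when $r(g) \ne v$ and $k = i$ when $r(g) = v$ with $g \in \mathcal{E}_i$; for an un-split edge $g \in E^1 \setminus s^{-1}(v)$, set $\phi(g) = g \cdot \varepsilon_{1k}$ with the same rule for $k$; and $\phi(g^*)$ is obtained by transposing the matrix position. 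A case-by-case check then verifies the Cuntz-Krieger relations: source/range and vertex orthogonality reduce to identities among matrix units; CK3 reduces to $g^*g = r(g)$ in $L_K(E)$ combined with the matrix-unit identity $\varepsilon_{kj}\varepsilon_{jk} = \varepsilon_{kk}$; and CK4 at each $v_j$ follows from CK4 at $v$ in $L_K(E)$ after absorbing the matrix units, using that $\sum_{g \in s^{-1}(v)} g g^* = v$.

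Injectivity of $\phi$ then follows from the Graded Uniqueness Theorem for Leavitt path algebras, since $\phi$ preserves the natural $\mathbb{Z}$-gradings (matrix units have degree zero, so $\phi$ carries homogeneous elements to homogeneous elements of the same degree) and sends each vertex of $E_{is}$ to a nonzero matrix. Setting $p := \phi(1_{L_K(E_{is})}) = \sum_{u \ne v} u\,\varepsilon_{11} + \sum_{i} v\,\varepsilon_{ii}$, the image of $\phi$ lies inside the corner $pM_n(L_K(E))p$, and one argues that $\phi$ surjects onto this corner by expressing every entry-type of the corner via products of $\phi$-images of paths and their duals in $E_{is}$, using the edges of $\mathcal{E}_i$ to realize the mixed-column entries.

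The idempotent $p$ is full in $M_n(L_K(E))$ because its $(1,1)$-entry is $\sum_{u \ne v} u + v = 1_{L_K(E)}$, so $\varepsilon_{11}$ lies in the two-sided ideal $M_n(L_K(E))\,p\,M_n(L_K(E))$, forcing every matrix unit $\varepsilon_{kl}$ into this ideal as well. Applying Morita's Theorem (Theorem \ref{Moritaresults}(1)) gives that $L_K(E_{is}) \cong pM_n(L_K(E))p$ is Morita equivalent to $M_n(L_K(E))$, which in turn is Morita equivalent to $L_K(E)$. The main obstacle I anticipate is the surjectivity step: one must systematically combine products of the $\phi$-images of paths in $E_{is}$ (including the duals of incoming edges in each class $\mathcal{E}_i$) to witness every position of the corner. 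The delicate entries are those of the form $a\,\varepsilon_{1i}$ with $a$ involving an edge into $v$ from class $\mathcal{E}_j$ for $j \ne i$; these require chaining an edge with the dual of an edge from a different class through a common vertex, and verifying that all such expressions arise inside $\phi(L_K(E_{is}))$ is the heart of the argument.
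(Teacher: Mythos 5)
Your proposal is correct in the case that matters, but it runs in the opposite direction from the paper's proof. The paper (importing the computation of \cite[Proposition 1.11]{alps:fiitcolpa}) maps $L_K(E)$ \emph{into} $L_K(E_{is})$ via $u\mapsto Q_u$, $e\mapsto T_e$, $e^*\mapsto T_{e^*}$, identifies $L_K(E)$ with the corner $\epsilon L_K(E_{is})\epsilon$ where $\epsilon = v_1+\sum_{u\in E^0\setminus\{v\}}u$, and then checks fullness of $\epsilon$ by showing each $v_i$ lies in the ideal generated by $\epsilon$, with a case division according to whether some class $\mathcal{E}_i$ contains a loop. You instead map $L_K(E_{is})$ into $M_n(L_K(E))$ and identify it with the corner $pM_n(L_K(E))p$; both routes use the graded uniqueness theorem for injectivity and Theorem \ref{Moritaresults}(1) to conclude. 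The trade-off is clean: your fullness check is immediate (the $(1,1)$-entry of $p$ is $1_{L_K(E)}$, so $\varepsilon_{11}$ and hence every matrix unit lies in the ideal generated by $p$), at the cost of a surjectivity argument that the paper gets essentially for free from the quoted source. Your diagnosis of that step is accurate, and it does go through: a monomial $\mu\nu^*$ with $r(\mu)=r(\nu)=v$ whose terminal edges lie in distinct classes $\mathcal{E}_i\neq\mathcal{E}_j$ lifts to paths in $E_{is}$ terminating at $v_i\neq v_j$, so $\mu\nu^*\varepsilon_{kl}$ is the image of no single monomial; one expands $\mu\nu^* = \sum_{g\in s^{-1}(v)}(\mu g)(\nu g)^*$ using relation (4) at $v$, after which the two lifts of each summand terminate at the same vertex of $E_{is}$ and the summand is visibly in the image (this is precisely your ``chaining through a common vertex''). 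One caveat, which applies equally to the paper's own proof: this expansion requires $v$ to be \emph{regular}, and the proposition as stated is genuinely false for a sink $v$ with $n\geq 2$. Indeed, for two parallel edges from $u$ into a sink $v$, split into singleton classes, one has $L_K(E)\cong M_3(K)$ while $L_K(E_{is})\cong M_2(K)\oplus M_2(K)$, and these are not Morita equivalent; correspondingly, in the paper's proof the branch $T_e = e$ when $s^{-1}(v)=\emptyset$ fails to satisfy $T_e = T_eQ_{r(e)}$ for $e\in\mathcal{E}_i$ with $i\geq 2$, and in your proof the image of $\phi$ is then a proper subalgebra of the corner. Since the proposition is invoked only at regular vertices (in Proposition \ref{collapseoneedgeprop}), this does not harm the paper, but both your argument and the paper's tacitly use this hypothesis and should record it.
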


\begin{proof}   The quoted result \cite[Corollary 3.9]{alps:fiitcolpa} applies to constructions more general than the in-split construction; however, the tools used to prove \cite[Corollary 3.9]{alps:fiitcolpa} only allow for the desired conclusion when the field $K$ is infinite.   
Accordingly, we provide here a short proof of Proposition \ref{insplitprop} which holds for all fields.  

	We define the  elements $\{Q_u \ | \ u\in E^0\}$ and $\{T_e, T_{e^*} \ | \ e\in E^1\}$ of $L_K(E_{is})$  by setting 
	\begin{equation*}
	Q_u=  \left\{
	\begin{array}{lcl}
	v_1&  & \text{if } u = v , \\
	u&  & \text{otherwise \ \ ,}%
	\end{array}%
	\right.
	\end{equation*}%
	\medskip
	\begin{equation*}
	T_e=  \left\{
	\begin{array}{lcl}
	\sum_{f\in s^{-1}(v)}ef_if^*_1&  & \text{if } e\in (r^{-1}(v)\cap \mathcal{E}_i)\setminus s^{-1}(v),\ s^{-1}(v) \neq \emptyset\\
	e&  & \text{if } e\in (r^{-1}(v)\cap \mathcal{E}_i)\setminus s^{-1}(v),\ s^{-1}(v) = \emptyset\\
	\sum_{f\in s^{-1}(v)}e_1f_if^*_1&  & \text{if } e\in r^{-1}(v)\cap \mathcal{E}_i\cap s^{-1}(v)\\
	e&  & \text{if } e\notin r^{-1}(v)  \ \ , \ \ \ \ \   \text{and}
	\end{array}%
	\right.
	\end{equation*}%
	\medskip
	\begin{equation*}
	T_{e^*}=  \left\{
	\begin{array}{lcl}
	\sum_{f\in s^{-1}(v)}f_1f^*_ie^*&  & \text{if } e\in (r^{-1}(v)\cap \mathcal{E}_i)\setminus s^{-1}(v),\ s^{-1}(v) \neq \emptyset\\
	e^*&  & \text{if } e\in (r^{-1}(v)\cap \mathcal{E}_i)\setminus s^{-1}(v),\ s^{-1}(v) = \emptyset\\
	\sum_{f\in s^{-1}(v)}f_1f^*_ie^*_1&  & \text{if } e\in r^{-1}(v)\cap \mathcal{E}_i\cap s^{-1}(v)\\
	e^*&  & \text{if } e\notin r^{-1}(v)  \ \ .
	\end{array}%
	\right.
	\end{equation*}%
	\medskip
	
	\noindent
	By repeating verbatim the corresponding argument in
	the proof of \cite[Proposition 1.11]{alps:fiitcolpa}, there exists an $K$-algebra homomorphism $\pi: L_K(E)\longrightarrow L_K(E_{is})$, which maps $u\longmapsto Q_u$, $e\longmapsto T_e$ and $e^*\longmapsto T_{e^*}$, such that $\pi(L_K(E)) = \pi(1_{L_K(E)})L_K(E_{is})\pi(1_{L_K(E)})$. Note that 
	$\pi(1_{L_K(E)}) = v_1 +\sum_{u\in E^0\setminus \{v\}}u =: \epsilon$.
	
	Since $Q_u$ has degree $0$, $T_e$ has degree $1$, and $T_{e^*}$ has degree $-1$ for all $u\in E^0$ and $e\in E^1$, $\pi$ is thus a $\mathbb{Z}$-graded homomorphism, whence the injectivity of $\pi$ is guaranteed by \cite[Theorem 4.8]{tomf:utaisflpa}. So we have $L_K(E) \cong \epsilon L_K(E_{is}) \epsilon$.   
	
	To obtain the Morita equivalence, we invoke Theorem \ref{Moritaresults}(1) (with $n=1$); so we  need only establish that $\epsilon$ is full, i.e., that  $L_K(E_{is}) = L_K(E_{is})\epsilon L_K(E_{is})$.  It is enough to show that $v_i$ is in $L_K(E_{is})\epsilon L_K(E_{is})$ for all $2\leq i\leq n$. We consider the following cases.
	
	\textit{Case} 1. $\mathcal{E}_i$ does not contain loops for all $i$. Then, for each $1\leq i\leq n$, there exists $f_i\in r^{-1}(v)\cap \mathcal{E}_i$ such that $s(f_i)\neq v$. Therefore, $s_{is}(f_i) = s(f_i)\in E^0\setminus \{v\}$ and $r_{is}(f_i) = v_i$ for all $i$. This implies that $s_{is}(f_i)$ is in the ideal $L_K(E_{is})\epsilon L_K(E_{is})$, and so the edge $f_i = s_{is}(f_i)f_i$ 
	is in $L_K(E_{is})\epsilon L_K(E_{is})$. Then $v_i = r_{is}(f_i) =f^*_if_i\in  L_K(E_{is})\epsilon L_K(E_{is})$ for all $i$. This proves that $L_K(E_{is})\epsilon L_K(E_{is}) = L_K(E_{is})$ in this case. 
	
	\textit{Case} 2. There exists $1\leq i\leq n$ such that $\mathcal{E}_i$ contains a loop. We may, without loss of generality, that there exists $1\leq k\leq n$ such that $\mathcal{E}_i$ $(1\leq i\leq k)$ contains a loop and $\mathcal{E}_j$ $(k+1\leq j\leq n)$ does not contain loops. Then, similar to Case 1, we get that $v_j$ is in the ideal $L_K(E_{is})\epsilon L_K(E_{is})$ for all $k+1\leq j\leq n$. For each $1\leq i\leq k$, there exists $f_i\in\mathcal{E}_i$ such that $s(e^i) = r(e^i)= v$. We have that $s_{is}(e^i_1) = v_1$ and $r_{is}(e^i_1) = v_i$ for all $1\leq i\leq k$. Since $v_1$ is in the ideal $L_K(E_{is})\epsilon L_K(E_{is})$, the edge $e^i_1 = v_1e^i_1$ 
	is in $L_K(E_{is})\epsilon L_K(E_{is})$. Then $v_i = r_{is}(e^i_1) = (e^i_1)^*e^i_1$ is in the ideal $L_K(E)\epsilon L_K(E)$ for all 
	$1\leq i\leq k$. This implies that $L_K(E_{is})\epsilon L_K(E_{is}) = L_K(E_{is})$ in this case as well. 
	
	Thus $L_K(E)$ is Morita equivalent to $L_K(E_{is})$, finishing the proof.
\end{proof}


Here is the first generalization of Proposition \ref{moveRprop}, in which we remove the hypotheses that the collapsing vertex receives edges from at most one other vertex.  


\begin{prop}\label{collapseoneedgeprop}
Let $E = (E^0, E^1, r, s)$ be a finite graph, and let $v\in E^0$ be a regular vertex which emits precisely one edge, $f_0$ say. Assume that $r(f_0)\neq v$ (i.e., that $f_0$ is not a loop).  Let $G$ be the ``collapse at $v$"  graph.   
Let $K$ be any field. Then $L_K(E)$ is Morita equivalent to $L_K(G)$.
\end{prop}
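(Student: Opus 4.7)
The plan is to reduce to Proposition \ref{moveRprop} by first applying the in-split construction of Definition \ref{insplitdef} to separate the ``receiving'' structure of $v$ into one copy per incoming edge, and then iteratively applying Move (R) at each copy. If $v$ is a source, then $v$ itself already satisfies the hypotheses of Definition \ref{moveRdef}: it emits the single non-loop edge $f_{0}$ and the receiving condition is vacuous. In that case the collapse graph $G$ coincides verbatim with the Move (R) graph at $v$, so Proposition \ref{moveRprop} yields the claim directly.

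Now assume $v$ is not a source, and write $r^{-1}(v) = \{e_{1},\ldots,e_{n}\}$. I would first form the in-split graph $E_{is}$ using the singleton partition $\mathcal{E}_{i} = \{e_{i}\}$ of $r^{-1}(v)$; Proposition \ref{insplitprop} then gives that $L_{K}(E)$ is Morita equivalent to $L_{K}(E_{is})$. In $E_{is}$ the vertex $v$ is replaced by copies $v_{1},\ldots,v_{n}$, and since $v$ emits only the non-loop edge $f_{0}$, each $v_{i}$ emits a single edge $(f_{0})_{i}$ of range $r(f_{0}) \ne v_{j}$ for every $j$, so $(f_{0})_{i}$ is not a loop. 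Moreover, by construction each $v_{i}$ receives exactly the single edge $e_{i}$, coming from the unique vertex $s(e_{i})$. Hence every $v_{i}$ satisfies the hypotheses of Definition \ref{moveRdef} inside $E_{is}$.

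I would then apply Proposition \ref{moveRprop} successively at $v_{1},v_{2},\ldots,v_{n}$. Move (R) at $v_{i}$ affects only the data incident to $v_{i}$: it deletes the edges $e_{i}$ and $(f_{0})_{i}$ and introduces a composed edge $[e_{i}(f_{0})_{i}]$ from $s(e_{i})$ to $r(f_{0})$, while leaving the remaining $v_{j}$'s and their incident edges unchanged. This disjointness guarantees that the Move (R) hypotheses persist at each unprocessed $v_{j}$ throughout the iteration, and the resulting chain of Morita equivalences terminates at a graph with vertex set $E^{0}\setminus\{v\}$ and edge set $(E^{1}\setminus(r^{-1}(v)\cup\{f_{0}\}))\cup\{[e_{i}(f_{0})_{i}]:1\le i\le n\}$. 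Under the identification $[e_{i}(f_{0})_{i}]\leftrightarrow[e_{i}f_{0}]$ this is precisely the collapse graph $G$ of Definition \ref{collapsedef}, so chaining the Morita equivalences yields $L_{K}(E)\sim L_{K}(G)$. The only real work here is the combinatorial bookkeeping confirming that the iterated Move (R) operations, together with the preceding in-split, recover $G$; no algebraic subtleties arise, as the injectivity and fullness issues are already encapsulated in Propositions \ref{moveRprop} and \ref{insplitprop}.
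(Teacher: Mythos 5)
Your proposal is correct and takes essentially the same route as the paper: in-split at $v$ via Proposition \ref{insplitprop}, then iterate Move (R) (Proposition \ref{moveRprop}) at the resulting vertices, verifying that the composite of these transformations is exactly the collapse graph $G$. The only difference is cosmetic --- you partition $r^{-1}(v)$ into singletons so that each $v_i$ receives a single edge, whereas the paper partitions by source vertex (taking $\mathcal{E}_i = r^{-1}(v)\cap s^{-1}(u_i)$) so that each $v_i$ receives from a single vertex; both choices satisfy the Move (R) hypotheses, the moves at distinct $v_i$ are disjoint in either case (since $s(e)\neq v$ for every $e\in r^{-1}(v)$), and both terminate at the same graph $G$.
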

\begin{proof}
If $v$ receives from at most one vertex (i.e, $|s(r^{-1}(v))| \leq 1$) then the statement follows immediately from Proposition \ref{moveRprop}. We assume then that $v$ receives from the vertices $\{u_1, u_2, ..., u_n\}$, where $n\geq 2$. Partition $r^{-1}(v)$ into disjoint nonempty subsets $\mathcal{E}_i = r^{-1}(v)\cap s^{-1}(u_i)$ for all $i = 1, 2, ..., n$. By using Proposition \ref{insplitprop}  at $v$ according to the partition $\{\mathcal{E}_i\}$, we get that $L_K(E)$ is Morita equivalent to $L_K(E_{is})$.  But  the graph $E_{is}$ is the graph $E$, with $v$ replaced by $n$ vertices $v_1, v_2, ..., v_n$,  each receiving from exactly one of the vertices $v$ received from,  and each emitting one edge to $r(f_0)$.  So we may apply Move (R) 
step by step at $v_1$  through $v_n$.  At each step, the Morita equivalence of the corresponding Leavitt path algebra is ensured by Proposition \ref{moveRprop}.   But this sequence of graph transformations, which first in-splits at $v$ and then performs Move (R) at each of the $v_i$, yields precisely the collapse at $v$ graph $G$.   
Hence  $L_K(E)$ is Morita equivalent to $L_K(G)$.
\end{proof}


We now show how to eliminate the restriction in Proposition \ref{collapseoneedgeprop} which requires that the collapsing vertex emits just one edge.

\begin{defn}[{\cite[Definition 2.6]{aalp:tcqflpa}}: the ``out-split" graph]\label{outsplitdef}
Let $E = (E^0, E^1, r, s)$ be a graph and $v\in E^0$ a vertex that is not a sink. Partition $s^{-1}(v)$ into a finite number, say $n$, of disjoint nonempty subsets $\mathcal{E}_1, \mathcal{E}_2, ..., \mathcal{E}_n$.  We form the \textit{out-split graph} $E_{os} = (E^0_{os}, E^1_{os}, r_{os}, s_{os})$ from $E$ using the partition $\{\mathcal{E}_i\mid i = 1, ..., n\}$ as follows:
$E^0_{os} = (E^0\setminus \{v\}) \cup \{v^1, v^2, ..., v^n\}$,
	
\begin{center}
$E^1_{os} = \{e^1, e^2, ..., e^n\mid e\in E^1, r(e) = v\} \cup \{f\mid f\in E^1\setminus r^{-1}(v)\}$,
\end{center} 
and define $r_{os}$, $s_{os}: E_{os}^1 \longrightarrow E_{os}^0$ by setting	$r_{os}(e^j) = v^j$,  $r_{os}(f) = r(f)$, 	and
	
\begin{equation*}
s_{os}(x)=  \left\{
\begin{array}{lcl}
s(f)&  & \text{if } x = f\notin s^{-1}(v)\\
v^i&  & \text{if } x =f\in s^{-1}(v) \text{ and } f\in \mathcal{E}_i\\
s(e)&  & \text{if } x = e^j \text{ and } e\notin s^{-1}(v)\\
v^i&  & \text{if } x = e^j,\ e\in s^{-1}(v) \text{ and } e\in \mathcal{E}_i
\end{array}%
.
\right.
\end{equation*}%
\end{defn}


\begin{prop}[{\cite[Theorem 2.8]{aalp:tcqflpa}}] \label{outsplitprop}
Let $K$ be any field. Let $E$ be a finite graph and $v\in E^0$ a regular vertex. Then $L_K(E)\cong L_K(E_{os})$ as $\mathbb{Z}$-graded $K$-algebras. In particular, $L_K(E)$ is Morita equivalent to $L_K(E_{os})$.
\end{prop}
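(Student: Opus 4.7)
The plan is to construct an explicit $\mathbb{Z}$-graded $K$-algebra isomorphism $\phi: L_K(E) \to L_K(E_{os})$, and to exhibit a two-sided inverse $\psi$ built from the partition-induced idempotents at $v$. Define $\phi$ on generators by $\phi(u) = u$ for every $u \in E^0 \setminus \{v\}$, $\phi(v) = \sum_{i=1}^n v^i$, and $\phi(e) = e$, $\phi(e^*) = e^*$ when $r(e) \neq v$ (these edges persist unchanged in $E_{os}^1$), while $\phi(e) = \sum_{j=1}^n e^j$ and $\phi(e^*) = \sum_{j=1}^n (e^j)^*$ when $r(e) = v$. All images are homogeneous of degree $0$, $+1$, or $-1$, respectively, so $\phi$ is automatically $\mathbb{Z}$-graded once it is shown to extend to an algebra map.

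The next step is to verify that these images satisfy the Cuntz-Krieger relations of $L_K(E_{os})$. Vertex orthogonality and the source/range identities reduce to bookkeeping, using that $s_{os}(e) = v^i$ (or $s_{os}(e^j) = v^i$) precisely when $e \in \mathcal{E}_i$. The relation $\phi(e^*)\phi(f) = \delta_{e,f}\phi(r(e))$ reduces to the identities $(e^j)^* f^k = 0$ for $e \neq f$ and $(e^j)^* e^k = \delta_{j,k} v^j$. The main bookkeeping step is (CK1) at $v$: one must check that $\sum_i v^i$ equals $\sum_{e \in s^{-1}(v)} \phi(e)\phi(e^*)$. Partitioning $s^{-1}(v)$ along $\mathcal{E}_1, \dots, \mathcal{E}_n$, using the collapse $\sum_{j,k} e^j(e^k)^* = \sum_j e^j(e^j)^*$ (which holds because $r_{os}(e^j)\,s_{os}((e^k)^*) = v^j v^k = \delta_{j,k} v^j$) in the case $r(e) = v$, and invoking (CK1) at each $v^i$ of $E_{os}$, whose outgoing edge set is exactly $\{e \in \mathcal{E}_i : r(e) \neq v\} \cup \{e^j : e \in \mathcal{E}_i \cap r^{-1}(v),\ 1 \leq j \leq n\}$, the two sides match term by term.

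For the inverse, set $p_i := \sum_{e \in \mathcal{E}_i} e e^* \in L_K(E)$. The (CK1) relation at $v$ gives $\sum_{i=1}^n p_i = v$, and $\{p_i\}$ is a family of pairwise orthogonal idempotents. Define $\psi: L_K(E_{os}) \to L_K(E)$ on generators by $\psi(u) = u$ for $u \in E^0 \setminus \{v\}$, $\psi(v^i) = p_i$, $\psi(f) = f$ and $\psi(f^*) = f^*$ for $f \in E^1 \setminus r^{-1}(v)$, and $\psi(e^j) = e\,p_j$, $\psi((e^j)^*) = p_j\,e^*$ for $e \in r^{-1}(v)$. Routine relation checks (using $e^*e = v = \sum_i p_i$ when $r(e) = v$) confirm that $\psi$ descends to an algebra homomorphism, and evaluating $\phi\psi$ and $\psi\phi$ on generators yields identities on both sides, so $\phi$ is a graded isomorphism and Morita equivalence follows. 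The step I expect to be the main obstacle is the (CK1) verification at $v$ under $\phi$, which requires reconstructing the single relation at $v$ from $n$ separate (CK1) relations at the $v^i$ after correctly redistributing the edges of $s^{-1}(v)$ between the cases $r(e) = v$ and $r(e) \neq v$. Should constructing $\psi$ explicitly prove unwieldy, one can instead derive injectivity of $\phi$ from the Graded Uniqueness Theorem (since each $\phi(u)$ is a nonzero sum of distinct vertices of $E_{os}$) and surjectivity from $\phi(p_i) = v^i$ and $\phi(e\,p_j) = e^j$.
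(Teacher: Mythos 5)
Your proof is correct, and it is in essence the same argument as the paper's: the paper gives no proof of Proposition \ref{outsplitprop}, quoting it directly from \cite[Theorem 2.8]{aalp:tcqflpa}, and your maps $\phi$ (sending $v \mapsto \sum_{i=1}^n v^i$ and $e \mapsto \sum_{j=1}^n e^j$ for $e \in r^{-1}(v)$) and $\psi$ (sending $v^i \mapsto \sum_{e\in\mathcal{E}_i}ee^*$ and $e^j \mapsto e\,p_j$) are precisely the standard ones used in that reference. Your key computations check out --- the collapse $\sum_{j,k}e^j(e^k)^* = \sum_j e^j(e^j)^*$ via $r_{os}(e^j)=v^j$, the reassembly of relation (4) at $v$ from the $n$ relations at the $v^i$ (each $v^i$ being regular since $\mathcal{E}_i \neq \emptyset$), and the fallback of the Graded Uniqueness Theorem for injectivity are all sound.
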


We are now in position to achieve the key Morita equivalence result.  

\begin{thm} Let $K$ be any field. \label{collapsethm}
Let $E$ be a finite graph, let $v \in 
E^0$ be a regular  vertex which is not the base of a loop, and let $G$ be the ``collapse at $v$" graph.    Then $L_K(E)$ is Morita equivalent to $L_K(G)$.
\end{thm}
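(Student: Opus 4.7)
The plan is to reduce to Proposition \ref{collapseoneedgeprop} by first performing an out-split at $v$ that splits the single vertex $v$ into several copies, each emitting exactly one edge, and then collapsing each of these copies in turn.

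Write $s^{-1}(v) = \{f_1, \ldots, f_n\}$ (with $n \geq 1$ since $v$ is regular) and partition $s^{-1}(v)$ into singletons $\mathcal{E}_i = \{f_i\}$. Form the out-split graph $E_{os}$ with respect to this partition. By Proposition \ref{outsplitprop}, $L_K(E) \cong L_K(E_{os})$, and in particular these algebras are Morita equivalent. In $E_{os}$, the vertex $v$ has been replaced by $n$ vertices $v^1, \ldots, v^n$; the vertex $v^i$ emits the single edge $f_i$, whose range is $r(f_i) \in E^0 \setminus \{v\}$ (here we use the hypothesis that $v$ is not the base of a loop, so $r(f_i) \neq v$, and hence $r_{os}(f_i) \notin \{v^1,\ldots,v^n\}$). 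Thus each $v^i$ is a regular vertex emitting exactly one edge, which is not a loop at $v^i$.

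Next I collapse $v^1, v^2, \ldots, v^n$ one at a time. At each step, the relevant vertex $v^i$ still satisfies the hypotheses of Proposition \ref{collapseoneedgeprop}: the earlier collapses of $v^1, \ldots, v^{i-1}$ cannot have altered the edges at $v^i$, because no edge of $E_{os}$ connects two distinct $v^j$'s (again using that $v$ has no loop in $E$). Each such collapse preserves Morita equivalence by Proposition \ref{collapseoneedgeprop}, so after $n$ collapses we obtain a graph whose Leavitt path algebra is Morita equivalent to $L_K(E)$.

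It remains to verify that this final graph is (isomorphic to) the collapse-at-$v$ graph $G$ of Definition \ref{collapsedef}. Both graphs have vertex set $E^0 \setminus \{v\}$. For edges: in both, the edges not in $r^{-1}(v) \cup s^{-1}(v)$ are inherited unchanged from $E$. The remaining edges of $G$ are the formal symbols $[ef]$ with $e \in r^{-1}(v)$ and $f \in s^{-1}(v)$, going from $s(e)$ to $r(f)$. In the out-split/collapse graph, for each $e \in r^{-1}(v)$ we have introduced copies $e^1, \ldots, e^n$ with $s(e^i) = s(e)$ and $r_{os}(e^i) = v^i$; collapsing $v^i$ then produces, for each $e \in r^{-1}(v)$, a single edge $[e^i f_i]$ from $s(e)$ to $r(f_i)$. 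Since $s^{-1}(v) = \{f_1, \ldots, f_n\}$, the correspondence $[e f_i] \longleftrightarrow [e^i f_i]$ is a bijection of edge sets, respecting source and range. (The degenerate case when $v$ is a source, so that $r^{-1}(v) = \emptyset$, is handled by the same argument: each $v^i$ becomes a source emitting one edge, and Proposition \ref{collapseoneedgeprop} still applies.)

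The only real obstacle is the bookkeeping in the last paragraph: one must be sure that the sequential collapses are mutually non-interfering (so that the hypotheses of Proposition \ref{collapseoneedgeprop} remain valid at each step), and that what falls out at the end is exactly the graph $G$ as defined. Once this combinatorial match is made, chaining the Morita equivalences from the out-split with those from the $n$ collapses yields $L_K(E)$ Morita equivalent to $L_K(G)$, as desired.
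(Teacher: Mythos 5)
Your proposal is correct and follows essentially the same route as the paper's own proof: out-split at $v$ with the singleton partition of $s^{-1}(v)$ (Proposition \ref{outsplitprop}), then collapse each $v^i$ in turn via Proposition \ref{collapseoneedgeprop}, checking that the composite transformation yields exactly the collapse-at-$v$ graph $G$. Your added bookkeeping (the non-interference of the successive collapses, the explicit edge bijection $[ef_i] \leftrightarrow [e^i f_i]$, and the degenerate source case) only makes explicit what the paper leaves to the reader.
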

\begin{proof}
If $|s^{-1}(v)| = 1$ then the result  follows immediately from Proposition \ref{collapseoneedgeprop}. We assume that $e_1, e_2, ..., e_n$ are the edges with source $v$, where $n\geq 2$. Partition $s^{-1}(v)$ into disjoint nonempty subsets $\mathcal{E}_i = \{e_i\}$ for $i = 1, 2, ..., n$. Applying Proposition \ref{outsplitprop}   at $v$ according to the partition $\{\mathcal{E}_i\}$ of $s^{-1}(v)$, we get that $L_K(E)$ is isomorphic to $L_K(E_{os})$. 
%
%
Since $v$ is not the base of a loop,  for each $1 \leq i \leq n$, $v^i$ emits exactly one edge $e_i$, and $e_i$  is not a loop.  In particular,  each $v^i$ satisfies the hypotheses of Proposition \ref{collapseoneedgeprop}.  So we may apply the collapsing process  
step by step at $v^1$  through $v^n$.  At each step, the Morita equivalence of the corresponding Leavitt path algebra is preserved by Proposition \ref{collapseoneedgeprop}.   But this sequence of graph transformations, which first out-splits at $v$ and then collapses at each of the $v^i$, yields precisely the collapse at $v$ graph $G$.   
Hence  $L_K(E)$ is Morita equivalent to $L_K(G)$, as desired.
\end{proof}

\begin{defn}[{\cite[Definition 1.2]{alps:fiitcolpa} and \cite[Notation 2.4]{ar:fpsmolpa}}] \label{sourceelimdef} 
Let $E = (E^0, E^1, r, s)$ be a graph, and let $v\in E^0$ be a source. We form the \emph{source
elimination} graph $E_{\setminus v}$ of $E$ as follows:

\begin{center}
$(E_{\setminus v})^0 = E^0\setminus \{v\}$, $(E_{\setminus v})^1 =
E^1\setminus s^{-1}(v)$, $s_{E_{\setminus v}} = s|_{(E_{\setminus
v})^1}$ and $r_{E_{\setminus v}} = r|_{(E_{\setminus v})^1}$. 
\end{center}
In other words, $E_{\setminus v}$ denotes the graph constructed from $E$ by deleting $v$ and all of edges in $E$ emitting from $v$.
\end{defn}

We note that the source elimination process is allowed at isolated vertices.   If $v$ is a source vertex in a graph $E$, and $v$ is not an isolated vertex, then clearly the source elimination process at $v$ coincides with the ``collapsing at $v$" move. So Theorem \ref{collapsethm} immediately gives  the following previously-established result. 

\begin{cor}[{\cite[Lemma 4.3]{ar:fpsmolpa}}]\label{AraRangasourceelimlem}
Let $E$ be a finite graph and $K$ any field. If $v\in E^0$ is a source vertex which is not isolated, then $L_K(E)$ is Morita equivalent to $L_K(E_{\setminus v})$. 
\end{cor}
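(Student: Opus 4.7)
The plan is to deduce this as a direct specialization of Theorem \ref{collapsethm}, after checking that the ``collapse at $v$'' graph coincides with the source elimination graph $E_{\setminus v}$ when $v$ is a non-isolated source.

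First I would verify the hypotheses of Theorem \ref{collapsethm} at the vertex $v$. Since $v$ is a source but not isolated, $v$ is not a sink, so $v$ is regular (recalling that in a finite graph ``regular'' just means ``non-sink''). Moreover, because $v$ is a source we have $r^{-1}(v) = \emptyset$; in particular, no edge $e \in E^1$ can satisfy $r(e) = v$, so $v$ cannot be the base of a loop. Thus $v$ meets the two standing assumptions of Theorem \ref{collapsethm} (regular, and not the base of a loop), so $L_K(E)$ is Morita equivalent to $L_K(G)$, where $G$ is the collapse at $v$.

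Next I would compare $G$ with $E_{\setminus v}$. Unwinding Definition \ref{collapsedef} with $r^{-1}(v) = \emptyset$, one has
\[
G^0 = E^0 \setminus \{v\}, \qquad G^1 = \bigl(E^1 \setminus (r^{-1}(v) \cup s^{-1}(v))\bigr) \cup \{[ef] \mid e\in r^{-1}(v),\, f\in s^{-1}(v)\} = E^1 \setminus s^{-1}(v),
\]
with $r_G$ and $s_G$ inherited from $E$. This is precisely the data defining $E_{\setminus v}$ in Definition \ref{sourceelimdef}, so $G = E_{\setminus v}$ as graphs. Combining with the previous paragraph yields $L_K(E) \sim_{\mathrm{Morita}} L_K(E_{\setminus v})$, as required.

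There is no real obstacle here; the content of the corollary is entirely absorbed into Theorem \ref{collapsethm}, and the only thing to check is the bookkeeping step that the collapsing construction degenerates to source elimination precisely when the regular vertex being collapsed happens to be a source. The hypothesis that $v$ is not isolated is needed only to guarantee that $v$ is regular, so that Theorem \ref{collapsethm} applies in the first place.
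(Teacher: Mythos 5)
Your proposal is correct and is exactly the paper's own argument: the paper likewise observes that for a non-isolated source $v$ (so $v$ is regular and, since $r^{-1}(v)=\emptyset$, not the base of a loop) the collapse at $v$ coincides with the source elimination graph $E_{\setminus v}$, and then cites Theorem \ref{collapsethm}. Your verification of the hypotheses and the identification $G = E_{\setminus v}$ is the same bookkeeping the paper performs implicitly.
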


We note  that Theorem \ref{collapsethm}   yields \cite[Lemma 4.4]{ar:fpsmolpa} as well.  

\medskip

Let $E$ be a finite graph. If $E$ is acyclic, then repeated application of the source
elimination process to $E$ yields the empty graph. On the other hand, if $E$ contains a cycle,
then repeated application of the source elimination process will yield a source-free graph
$E_{sf}$ which necessarily contains a cycle.

Consider the sequence of graphs which arises in some step-by-step process of source
eliminations
\[E= E_0\rightarrow E_1\rightarrow\cdots\rightarrow E_i\rightarrow\cdots
\rightarrow E_t = E_{sf}.\] To avoid defining a graph to be the empty set, we define
$E_{sf}$ to be the graph $E_{triv}$ (consisting of one vertex and no edges) in case
$E_{t-1} = E_{triv}$.

\begin{rem}\label{Esfunique}
Although there in general are many different orders in which a step-by-step source elimination process can be carried out, the resulting source-free subgraph $E_{sf}$ is always the same
(see, e.g., \cite[Lemma 3.13]{anp:lpahugn}). 
\end{rem}

\begin{rem}\label{isolatedverticesrem}   For a finite graph $E$, we perform a sequence of source eliminations  $E= E_0\rightarrow E_1\rightarrow\cdots\rightarrow E_i\rightarrow\cdots
\rightarrow E_t = E_{sf}$.     By Remark \ref{Esfunique} we may assume that the final $k$ steps in the process involve eliminating any isolated vertices which may arise.   
The non-negative integer $k$ is precisely the number of vertices of $E$ which are sinks in $E$, but which are not in $E_{sf}^0$.   As observed in \cite{anp:lpahugn}, $k$ may also be viewed as the number of sinks $u$ in $E$ for which every path $p \in {\rm Path}(E)$ having $r(p)=u$ contains no closed subpath.    Let $V_k$ denote the graph with $k$ vertices and no edges.   Then repeated application of  Lemma \ref{AraRangasourceelimlem} gives that $L_K(E)$ is Morita equivalent to $L_K(V_k \sqcup E_{sf})$ (when $E_{sf}$ is nontrivial), and to $L_K(V_k)$ (when $E_{sf} $ is the trivial graph $ V_1$).  
\end{rem}   

\begin{rem}\label{collapserem}
For a finite graph $E$, we form the (uniquely-determined) graph $E_{sf}$.    In case $E_{sf}$ is not the trivial graph, we may then perform a step-by-step process in which we produce a sequence of graphs $$E_{sf} = F_0\rightarrow F_1\rightarrow\cdots\rightarrow\cdots
\rightarrow F_\ell := F,$$
where each $F_{i+1}$ is formed from $F_i$ by performing a collapsing at some vertex $v$ of $F_i$ which is not the base of a loop.   (We note that by the construction of $E_{sf}$  there will be no isolated vertices in any of the $F_i$.)   In this way,  $E_{sf}$ is transformed to a totally looped graph $F$ in which there are no isolated vertices.   
\end{rem}

\begin{exas}\label{collapsenotunique}   Although the process of source elimination yields a graph $E_{sf}$  which is unique up to graph isomorphism, the process described in Remark \ref{collapserem}, if carried out in different orders,  does not necessarily yield isomorphic graphs.   For instance, let $E$ be the graph
$$ \xymatrix{  \bullet^{v_{2}} \ar@/^.5pc/[d]^{e_2} &   \\
\bullet^{v_1}\ar@/^.5pc/[d]^{f_2} \ar@/^.5pc/[u]^{e_1}  \ar[r]^f & \bullet^{v_4} \ \ .  \\
\bullet^{v_3} \ar@/^.5pc/[u]^{f_1}  & } \ \ $$
We note that $E = E_{sf}$, i.e., $E$ has no sources.

If we first collapse $E$ at $v_2$, and then subsequently at $v_3$, the resulting totally looped graph $F_2$  is

$$ \xymatrix{  \bullet^{v_{1}} \ar@(ul,ur)^{[e_1e_2]} \ar@(dr,dl)^{[f_1f_2]} \ar[r]^f  &   \bullet^{v_4}  
 }  \ \ .$$

\bigskip

\noindent
On the other hand, if we instead  collapse $E$ at $v_1$, then the resulting totally looped graph $F_1$ is 

$$ \xymatrix{  \bullet^{v_{2}} \ar@(ur,ul)_{[e_2e_1]}  \ar@/^.5pc/[dd]^{[e_2f_1]} \ar[dr]^{[e_2f]}  &   \\
 & \bullet^{v_4}  \\
\bullet^{v_3} \ar@/^.5pc/[uu]^{[f_2e_1]} \ar@(dr,dl)^{[f_2f_1]}  \ar[ur]_{[f_2f]}  & } $$

\bigskip
\bigskip
\noindent
Clearly $F_1$ and $F_2$ are not isomorphic as graphs.    (We note that both $F_1$ and $F_2$ are indeed totally looped; no loop is required at a sink, specifically, at $v_4$.)

\end{exas}




We are now in position to establish the main result of this section.  

\begin{thm}\label{decompthm}   
Let $K$ be any field, and let $E$ be a finite graph.   Let $k$ denote the number of vertices of $E$ which are sinks in $E$, but which are not in   $E_{sf}^0$.  Let $F$ denote any totally looped graph which  is constructed  from the graph $E_{sf}$ via some step-by-step process, where at each step we collapse at a regular vertex  which is not the base of a loop. Then     
$$L_K(E) \ \mbox{is Morita equivalent to } (\prod^k_{i=1}K_i)\oplus L_K(F),$$
 where $K_i\cong K$ for $1\leq i \leq k$.  Consequently, there exist $k\geq 0$, and (when $k\geq 1$) positive integers $m_1, \dots, m_k$,  a positive integer $n$, and a full idempotent  $p \in M_n(L_K(F))$, for which  
$$ L_K(E) \cong (M_{m_1}(K)\oplus M_{m_2}(K)\oplus\cdots\oplus M_{m_k}(K))\oplus pM_n(L_K(F))p$$            
as $K$-algebras.
\end{thm}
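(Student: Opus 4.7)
The strategy is to chain together known Morita equivalences step-by-step, and then convert the resulting Morita equivalence into an actual isomorphism by invoking Morita's Theorem. First I would apply Remark \ref{isolatedverticesrem}, which yields that $L_K(E)$ is Morita equivalent to $L_K(V_k \sqcup E_{sf})$, where $V_k$ is the graph consisting of $k$ isolated vertices (if $E_{sf}$ is trivial then this simplifies to $L_K(V_k)$, but the argument below is uniform). Proposition \ref{Lpaproperties}(2) then rewrites $L_K(V_k \sqcup E_{sf})$ as the ring direct sum $L_K(V_k) \oplus L_K(E_{sf})$, and parts (1) and (2) of that same proposition identify $L_K(V_k)$ with $\prod_{i=1}^k K_i$, each $K_i\cong K$.

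Next, following the sequence of collapses $E_{sf} = F_0 \to F_1 \to \cdots \to F_\ell = F$ described in Remark \ref{collapserem}, I would iterate Theorem \ref{collapsethm} to conclude that $L_K(E_{sf})$ is Morita equivalent to $L_K(F)$. Since Morita equivalence of unital rings is preserved under taking ring direct sums (a direct consequence of the two directions of Theorem \ref{Moritaresults}(2)), chaining these equivalences yields that $L_K(E)$ is Morita equivalent to $(\prod_{i=1}^k K_i) \oplus L_K(F)$, which establishes the first displayed assertion.

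For the ``Consequently'' clause, I would apply Theorem \ref{Moritaresults}(2) to the Morita equivalence just obtained, producing a compatible ring direct sum decomposition $L_K(E) \cong \bigl(\bigoplus_{i=1}^k A_i\bigr) \oplus B$, where each $A_i$ is a unital $K$-algebra Morita equivalent to $K$ and $B$ is a unital $K$-algebra Morita equivalent to $L_K(F)$. Invoking Theorem \ref{Moritaresults}(1) for each summand: we get $A_i \cong f_i M_{n_i}(K) f_i$ for some full idempotent $f_i \in M_{n_i}(K)$. Because $M_{n_i}(K)$ is simple, any nonzero idempotent there is automatically full, and standard linear algebra shows $f_i$ is conjugate to a diagonal idempotent of rank $m_i$, so $A_i \cong M_{m_i}(K)$. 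Similarly, $B \cong pM_n(L_K(F))p$ for some full idempotent $p$. Substituting and collecting yields the claimed isomorphism.

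Since the substantive geometric/algebraic content has already been absorbed into Theorem \ref{collapsethm} and Remark \ref{isolatedverticesrem}, the main thing to get right here is the bookkeeping: one must make sure the direct sum decompositions on the two sides of the equivalences line up properly (which is precisely what Theorem \ref{Moritaresults}(2) guarantees), and that the full-idempotent condition is preserved when translating from abstract Morita equivalence back to a concrete corner description. No new computation with graphs or with $L_K$-generators is required beyond what is already established.
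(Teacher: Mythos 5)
Your proposal is correct and follows essentially the same route as the paper: Remark \ref{isolatedverticesrem} plus Proposition \ref{Lpaproperties}(1),(2) to split off the $k$ copies of $K$, repeated application of Theorem \ref{collapsethm} along the sequence of collapses from Remark \ref{collapserem}, and then Theorem \ref{Moritaresults}(1),(2) to convert the Morita equivalence into the stated isomorphism. The only cosmetic difference is that where the paper simply invokes the well-known fact that the rings Morita equivalent to a field $K$ are exactly the $M_m(K)$, you re-derive it from Morita's Theorem together with the observation that a corner of $M_{n_i}(K)$ by an idempotent of rank $m_i$ is $M_{m_i}(K)$ --- a harmless unpacking of the same step.
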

\begin{proof}
By Remark \ref{isolatedverticesrem} we have that $L_K(E)$ is Morita equivalent to $L_K(V_k \sqcup E_{sf})$.  
Applying $k$ times statements (1) and (2) of  Proposition \ref{Lpaproperties}  
gives that $L_K(E)$ is Morita equivalent to  $(\prod^k_{i=1}K_i)\oplus L_K(E_{sf})$.   Then one forms the sequence of graphs   $E_{sf} = F_0\rightarrow F_1\rightarrow\cdots\rightarrow\cdots
\rightarrow F_\ell := F,$ where each $F_{i+1}$ is produced from $F_i$ by collapsing at some regular vertex of $F_i$ which is not the base of a loop.    The Morita equivalence then follows directly from repeated application of Theorem \ref{collapsethm}.  

To establish the isomorphism, we use the    well-known fact  that for a field $K$, the only rings Morita equivalent to $K$ are of the form $M_n(K)$ for some positive integer $n$.   So the consequence follows immediately, using statements (1) and (2) of Theorem \ref{Moritaresults}. 
\end{proof}


%
%
%

%
%
	
\section{Corners of unital Leavitt path algebras}

The main goal of this section (indeed, of this article)  is to show that every corner of a Leavitt path algebra of a finite graph is also isomorphic to a Leavitt path algebra (Theorem \ref{cornerthm}).  Consequently, we achieve what on the surface seems to be a more general result (Theorem \ref{End(Q)general}):  for any finite graph $E$ and any nonzero finitely generated projective left $L_K(E)$-module $Q$, the endomorphism ring ${\rm End}_{L_K(E)}(Q)$ is isomorphic to $L_K(F)$ for some finite graph $F$.   


\begin{defn}[{\cite[Definition 2.2.21]{AAS}:  ``the restriction graph"}]\label{restdef}
Let $E$ be a graph and let $H$ be a hereditary subset of $E^0$.
We denote by $E_H$ the \textit{restriction graph}:

\begin{center}
$E^0_H : = H$, \quad $E^1_H := \{e\in E^1 \mid s(e) \in H\},$
\end{center}
and the source and range maps in $E_H$ are simply the source and range maps in $E$, restricted to $H$.  (We note that $H$ must be hereditary in order for the construction  $E_H$ to actually yield a graph, specifically, so that the restriction of the range function $r$ to edges having $s(e)\in H$ is defined.)  
\end{defn}

\begin{rem}\label{restrictioniscompleterem}
By construction, the restriction graph $E_H$ is a complete subgraph of $E$, so that by Proposition \ref{Lpaproperties}(3)  we may view $L_K(E_H)$ as a $K$-subalgebra of $L_K(E)$.  
\end{rem}




\begin{defn}(The ``strands of hair" extension of a graph) \label{hairdef}    Let $E$ be a finite graph, with $E^0 = \{v_1, v_2, \dots, v_t\}$.   Let $n_1, n_2, \dots, n_t$ be a sequence of positive integers.   We define the {\it strands of hair extension} (concisely: {\it hair extension}) graph $E^+(n_1, n_2, \dots, n_t)$  to be the graph $E$,  together with an extension by a ``strand of hair" of length $n_i-1$ at each $v_i$.   Graphically, $E^+(n_1, ... , n_t)$ is formed by adding these vertices and edges  to $E$:  
$$\xymatrix{  \bullet^{v_i^{n_i-1}} \ar[r]^{e_i^{n_i-1}} & \cdots \bullet^{v_i^2} \ar[r]^{e_i^2} & \bullet^{v_i^1} \ar[r]^{e_i^1} & }  \ $$
where  $r(e_i^1)=v_i$.    (So if $n_i=1$, then we attach no new edges at $v_i$.)    If the sequence $n_1, n_2, \dots, n_t$ is understood from context, we will denote  $E^+(n_1, n_2, \dots, n_t)$ simply by $E^+$. 
\end{defn}

\begin{rem}\label{haircompleterem}
By construction,  $E$ is a complete subgraph of  any hair extension $E^+(n_1, n_2, \dots, n_t)$, so that by Proposition \ref{Lpaproperties}(3)  we may view $L_K(E)$ as a $K$-subalgebra of $L_K(E^+(n_1, n_2, \dots, n_t))$.  
\end{rem}

For clarification, we consider the following example.
\begin{exas}\label{hairex}
If $E$ is the graph
	
$$\xymatrix{\bullet^{v_1} \ar@(ul,ur)\ar[r]& \bullet^{v_2}}$$ then
$E^+(3,2)$ is the graph

$$\xymatrix{\bullet^{v_1^2}\ar[r]& \bullet^{v_1^1}\ar[r]&
		\bullet^{v_1} \ar@(ul,ur)\ar[r]& \bullet^{v_2}&\bullet^{v_2^1}\ar[l]  }$$ 
\end{exas}


The following Proposition illuminates exactly why the totally looped property should play such a central role in this analysis.  We first need a lemma.

\begin{lem}\label{everysubsetsaturated}({cf. \cite[Lemma 4.5]{ar:cocka}})
Let $E$ be totally looped. Then every subset of $E^0$ is saturated.   
\end{lem}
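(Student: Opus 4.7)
The plan is to unwind the definition of \emph{saturated} and observe that the totally looped hypothesis forces every regular vertex to lie in the range set of the edges it emits, which trivializes the saturation condition.

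More precisely, fix an arbitrary subset $H \subseteq E^0$, and suppose $v \in E^0$ is a regular vertex such that $r(s^{-1}(v)) \subseteq H$; I must show $v \in H$. Since $E$ is totally looped and $v$ is regular, there exists a loop $e \in E^1$ based at $v$, i.e., $s(e) = r(e) = v$. Then $e \in s^{-1}(v)$, so $v = r(e) \in r(s^{-1}(v))$. By assumption $r(s^{-1}(v)) \subseteq H$, hence $v \in H$. Since $v$ and $H$ were arbitrary, every subset of $E^0$ is saturated.

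The only step that even requires a moment's thought is noting that the loop at $v$ automatically belongs to $s^{-1}(v)$ and contributes $v$ itself to the range set; after that, the implication in the definition of saturation becomes vacuous-looking but applies uniformly to every subset. There is no real obstacle here — the proof is essentially a one-line consequence of the definitions, which is presumably why this lemma is stated as the engine driving the subsequent analysis rather than as a result with substantive content of its own.
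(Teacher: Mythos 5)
Your proof is correct and is essentially identical to the paper's: both observe that a regular vertex $v$ in a totally looped graph carries a loop $f \in s^{-1}(v)$, so $v = r(f) \in r(s^{-1}(v)) \subseteq H$, which verifies saturation for an arbitrary subset $H$. Nothing is missing, and no further comparison is needed.
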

\begin{proof} Let $H$ be a subset of $E^0$ and $v\in E^0$ a regular vertex with $r(s^{-1}(v)) \subseteq H$. By  hypothesis $v$ is the base of a loop $f$, i.e.,  $r(f) = v = s(f)$.  So $v\in r(s^{-1}(v))$, and so $v\in H$. Thus $H$ is saturated.  
\end{proof}

\begin{prop}  \label{hairprop}
Let $F = \{v_1, v_2, \dots, v_t\}$ be a totally looped finite graph.  Let $n_1, n_2, \dots, n_t$ be a sequence of positive integers, and let $E$ denote the hair extension  graph $F^+(n_1, n_2, \dots, n_t)$.      Let $Q$ be a nonzero finitely generated projective left $L_K(E)$-module.     Then there exist positive integers  $m_i $ ($1\leq i \leq u$),  and a hereditary subset  $T = \{ v_{j_1}, v_{j_2}, \dots, v_{j_u} \}$ of $F^0$ such that 
$$Q  \ \cong  \ \bigoplus_{i=1}^u \  m_i L_K(E)v_{j_i} .$$
Moreover, if $Q$ is a generator for $L_K(E)$-Mod, then $T = F^0$.  
\end{prop}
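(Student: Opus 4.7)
The plan is to realize $Q$ as a finite direct sum of principal left ideals indexed by vertices of $F$, and then use the totally looped hypothesis to inflate the indexing set to a hereditary subset of $F^0$ without disturbing the isomorphism class of $Q$; the ``moreover'' clause will reduce to a calculation of the hereditary and saturated closure of that indexing set inside $E$.

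By Theorem \ref{AMPthm}(2) we may write $Q \cong \bigoplus_{x\in X} n_x L_K(E)x$ for some finite $X\subseteq E^0$ and positive integers $n_x$. For each hair vertex $v_i^k \in X$, the single outgoing edge $e_i^k$ has range $v_i^{k-1}$ (or $v_i$ when $k=1$), so Theorem \ref{AMPthm}(1) gives $L_K(E)v_i^k \cong L_K(E)r(e_i^k)$; iterating along the strand of hair at $v_i$ reduces this summand to $L_K(E)v_i$. Performing this replacement for every hair vertex occurring in $X$, we obtain $Q \cong \bigoplus_{i=1}^t m_i L_K(E)v_i$ for some nonnegative integers $m_i$. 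Set $T_0 := \{v_i : m_i > 0\}$.

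The main step, which I expect to be the principal obstacle, is to enlarge $T_0$ to its hereditary closure $T := T(T_0) \subseteq F^0$ while preserving $Q$ up to isomorphism. The crucial input is that each $v \in T_0$ which is not a sink in $F$ is the base of a loop, so the defining relation $[v] = \sum_{e\in s_E^{-1}(v)}[r(e)]$ in the monoid $M_E \cong \mathcal{V}(L_K(E))$ of Theorem \ref{AMPthm} can be rearranged to the form $[v] = [v] + [Z]$, where $[Z]$ includes $[w]$ as a summand for each edge $v \to w$ in $F$ with $w \neq v$. Translating back yields a left-module isomorphism
\[L_K(E)v \ \cong \ L_K(E)v \ \oplus \ Y,\]
with each such $L_K(E)w$ appearing as a direct summand of $Y$. (All those $w$ lie in $F^0$, since the hair extension adds no edges emitted from an $F$-vertex.) Replacing one copy of $L_K(E)v$ in the decomposition of $Q$ according to this isomorphism enlarges the support of the decomposition by the direct $F$-successors of $v$ without changing the isomorphism class of $Q$. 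Since $F^0$ is finite, iterating this move at each newly-added vertex terminates, and the resulting support is precisely $T$.

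For the ``moreover'' clause, suppose $Q$ is a generator of $L_K(E)$-Mod. Then the trace ideal of $Q$ coincides with the two-sided ideal $I$ of $L_K(E)$ generated by $T$, and this ideal equals $L_K(E)$; in particular $I$ contains every vertex of $E$. The set of vertices in $I$ is the hereditary and saturated closure of $T$ inside $E$: hereditariness follows from $p^* v p = r(p) \in I$ for paths $p$ with $s(p) = v \in T$, and saturation from relation (4). Since $T$ is already hereditary in $F$ and the hair extension adds no outgoing edges at $F$-vertices, $T$ is hereditary in $E$; Lemma \ref{everysubsetsaturated} applied to $F$ says that $T$ is also saturated in $F$, so no further $F^0$-vertex appears upon saturation in $E$. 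The only additional vertices introduced are the hair vertices $v_i^k$ with $v_i \in T$, so the hereditary saturated closure of $T$ in $E$ is $T \cup \{v_i^k : v_i \in T\}$. For this to exhaust $E^0$, we must have $T = F^0$.
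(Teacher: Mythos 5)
Your argument is correct, and for the displayed decomposition it is essentially the paper's own proof: the same appeal to Theorem \ref{AMPthm}(2), the same collapse of hair-vertex summands down their strands via Theorem \ref{AMPthm}(1), and the same loop trick --- using the loop guaranteed by total loopedness to rewrite $L_K(E)v \cong L_K(E)v \oplus (\text{successor summands})$ so the support can be enlarged to the hereditary closure $T$ without losing any summand. (The paper implements this by walking along a repeat-free path from a support vertex to a target vertex of $T$; your one-step successor iteration is the same idea and terminates for the same finiteness reason.) Where you genuinely deviate is the ``moreover'' clause. The paper extracts explicit elements $r_{i,j}, r'_{i,j}$ from a split epimorphism $sQ \to L_K(E)w$ via Proposition \ref{End}(1), observes that the absence of paths from $F^0$ into the hair forces the terms $wr_{i,j}v_i$ and $v_ir'_{i,j}w$ into $L_K(F)$, and then concludes inside $L_K(F)$ using Lemma \ref{everysubsetsaturated} and \cite[Theorem 2.5.9]{AAS}. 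You instead identify the trace ideal of $Q$ with the ideal $I(T)$ of $L_K(E)$ generated by $T$ --- correct, since the trace of $\bigoplus_{v\in T} m_v L_K(E)v$ is $\sum_{v\in T} L_K(E)vL_K(E)$ --- and compute the hereditary saturated closure of $T$ directly in $E$, getting $T \cup \{v_i^k \mid v_i \in T\}$, which can equal $E^0$ only if $T = F^0$. This is slightly cleaner, as it avoids pushing elements into $L_K(F)$; but note that your sketch (the $p^*vp = r(p)$ computation and relation (4)) only establishes the easy inclusion $\overline{T} \subseteq I(T)\cap E^0$, whereas the inclusion your argument actually uses, $I(T)\cap E^0 \subseteq \overline{T}$, is the nontrivial direction and requires the graded-ideal structure theory --- precisely the result \cite[Theorem 2.5.9]{AAS} that the paper cites --- so you should cite it at that step rather than present it as following from relation (4) alone.
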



\begin{proof}
By Theorem \ref{AMPthm}(2)   we have that  
$$Q \cong \bigoplus_{w \in E^0} m''_w L_K(E) w$$  
for some (not necessarily unique) non-negative integers $m''_w$.      For any $w\in E^0$ which is of the form $v_i^j$ for some $j\geq 1$ (i.e., for each ``added" vertex $w$), we have $L_K(E) w = L_K(E)  v_i^j \cong L_K(E) v_i$ (by using  Theorem \ref{AMPthm}(1) $j-1$ times).    So by replacing appropriate summands, we have 

\begin{equation*}
Q \cong \bigoplus_{w \in F^0} m'_w L_K(E) w  \tag{\mbox{$\ast$}}
\end{equation*}
for some non-negative integers $m'_w$.  Clearly we can eliminate any summand for which $m'_w = 0$.  Denote by $T_1$  the set of remaining vertices (i.e., the set of vertices $w$ in $F^0$ for which $m'_w \geq 1$ in ($\ast$)).  So 
\begin{equation*}
Q \cong \bigoplus_{w \in T_1} m'_w L_K(E) w.  \tag{\mbox{$\ast \ast$}}
\end{equation*}

   Let $T$ denote the hereditary closure of $T_1$. (Note:  The hereditary closure of $T_1$ is the same regardless of whether we view $T_1\subseteq F^0$ or $T_1\subseteq E^0$; either way, $T\subseteq F^0$.)    We claim that 
$$Q \cong \bigoplus_{w \in T} m_w L_K(E) w,$$ 
where each $m_w \geq 1$.   For, let $z\in T$, let $v\in T_1$, and suppose that there is a path $p = e_1e_2 \cdots e_x$ from $v$ to $z$.   By an observation made in the Introduction,  we may assume that the sequence of vertices $v=s(e_1), s(e_2), \dots , s(e_x), r(e_x)=w$ which appear in $p$ contains no repeats.
 In particular, no $e_i$ is a loop.  
By Theorem \ref{AMPthm}(1)  we have $L_K(E)v \cong \oplus_{e\in s^{-1}(v)}L_K(E)r(e)$.   Since $v\in F^0$ we have that $r(e) \in F^0$ for all $e\in s^{-1}(v)$.   But because $F$ is totally looped, $v = r(f)$ for at least one loop $f \in s^{-1}(v)$; in addition, $f\neq e_1$ because $e_1$ is not a loop.  So this decomposition yields that 
\begin{equation*}
L_K(E)v  \ \cong  \ L_K(E)v \  \oplus L_K(E)r(e_1) \  \oplus \  \bigoplus_{e\in s^{-1}(v) \setminus \{f, e_1\}}L_K(E)r(e).   \tag{\mbox{$\dagger$}}
\end{equation*}
\noindent
  Now replace any one of the summands isomorphic to $L_K(E)v$ which appears in the  decomposition ($\ast$$\ast$)  of $Q$  by the isomorphic version of $L_K(E)v$ given in $(\dagger)$; note that such a replacement does not decrease the number of copies of the summand $L_K(E)v$ which appear in ($\ast$$\ast$).   Continuing this same process now on the summand $L_K(E)r(e_1)$, we see that after $x$ steps we arrive at a direct sum decomposition of $Q$ which includes a summand isomorphic to $L_K(E)z$, and which has not decreased the number of summands isomorphic to any given $L_K(E)w$ which appeared in  decomposition ($\ast$$\ast$) of $Q$. (Note that $L_K(E)z$ will appear as a summand of $Q$ in no more than $x$ steps, because there are no repeats in the sequence of vertices in $p$.)   This completes the proof of the claim, and establishes the displayed isomorphism of the statement.  
  
  For the second part, suppose that $Q$ is in addition a generator for $L_K(E)$-Mod.  Let $w\in F^0$.   Then for some positive integer $s$ there is a split epimorphism $sQ \to L_K(E)w \to 0$; so there are maps $\varphi \in {\rm Hom}_{L_K(E)}(sQ, L_K(E)w) $ and $\psi \in {\rm Hom}_{L_K(E)}(L_K(E)w, sQ)$ for which $\psi \varphi $ is the identity map on $L_K(E)w$.     But using Proposition \ref{End}(1) and the standard decomposition of maps to and from finite direct sums, the equation $w = (w)\psi\varphi$  yields elements $r_{i,j}$ and $r'_{i,j}$ in $L_K(E)$, with $1\leq i \leq u$ and $1 \leq j \leq s\cdot m_i$, for which 
  $$ w =  \sum_{i=1}^u \sum_{j=1}^{s\cdot m_i} w r_{i,j} v_i r'_{i,j} w .$$
  Because $w$ and each $v_i$ is in $F$, and because there are no paths which start in $F^0$ and end in any of the added vertices which produce $E$ as a hair extension of $F$, each expression $wr_{i,j} v_i$ and $v_i r'_{i,j} w$ is an element of $L_K(F)$.  Since $w$ and each $v_i$ an idempotent, we can therefore  view each term $wr_{i,j} v_i r'_{i,j}w$ in the displayed sum as an element of the ideal of $L_K(F)$ generated by the set of vertices $T\subseteq F^0$.  We have thus established that the ideal $I(T)$  of $L_K(F)$ generated by $T$ contains all vertices of $F^0$, and so $I(T) = L_K(F)$.  But $T$ is not only hereditary, it is by default saturated as well (Lemma \ref{everysubsetsaturated}).   We now apply \cite[Theorem 2.5.9]{AAS} to conclude that $T = F^0$.  
\end{proof}

\begin{thm} \label{End(Q)hairthm}
Let $F$ be a finite totally looped graph, let $E = F^+$ be a hair  extension of $F$, and let $Q$ be a nonzero finitely generated projective left $L_K(E)$-module.    Then ${\rm End}_{L_K(E)}(Q)$ is isomorphic to a Leavitt path algebra.   Specifically, ${\rm End}_{L_K(E)}(Q) \cong L_K(G)$, where $G$ is a hair   extension of the restriction graph $F_T$ of $F$ by some hereditary subset $T$ of $F^0$.  (In particular, $G$ is a hair   extension of a totally looped graph.)    

Moreover, if $Q$ is in addition a generator for $L_K(E)$-Mod, then ${\rm End}_{L_K(E)}(Q) \cong L_K(G)$ where $G$ is a hair extension of $F$.
\end{thm}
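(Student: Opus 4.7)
The plan is to first apply Proposition \ref{hairprop} to decompose $Q$ explicitly, then exhibit $G$ directly as a hair extension of $F_T$ whose regular module realizes the same block-matrix endomorphism ring as $Q$.

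Concretely, Proposition \ref{hairprop} yields a hereditary subset $T = \{v_{j_1}, \dots, v_{j_u}\} \subseteq F^0$ together with positive integers $m_1, \dots, m_u$ for which $Q \cong \bigoplus_{i=1}^u m_i L_K(E) v_{j_i}$. I would then define $G$ to be the hair extension $F_T^+(m_1, \dots, m_u)$, attaching at $v_{j_i}$ a strand of hair of length $m_i - 1$ (so nothing is added when $m_i = 1$). A short check --- a loop based at a regular vertex $v \in T$ of $F$ remains inside $F_T$ because $T$ is hereditary --- confirms that $F_T$ is totally looped, so $G$ is indeed a hair extension of a totally looped graph, as required by the statement.

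The key technical observation is that for all $1 \leq i, j \leq u$,
\[ v_{j_i} L_K(E) v_{j_j} \;=\; v_{j_i} L_K(F_T) v_{j_j} \;=\; v_{j_i} L_K(G) v_{j_j}. \]
This holds because $F_T$ is a complete subgraph of both $E$ and $G$ (the hair edges in each are directed into the body of the graph, and $T$ is hereditary in $F^0$), so every product $pq^*$ with $s(p) = v_{j_i}$ and $s(q) = v_{j_j}$ necessarily has $p$ and $q$ confined to paths in $F_T$. Proposition \ref{Lpaproperties}(3) then lets us view $L_K(F_T)$ as a common subalgebra of both $L_K(E)$ and $L_K(G)$ and identify the three corners.

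Applying Proposition \ref{End}(2) to $Q \cong \bigoplus_{i=1}^u m_i L_K(E) v_{j_i}$ presents ${\rm End}_{L_K(E)}(Q)$ as an explicit block matrix ring with $(i,j)$-block of size $m_i \times m_j$ and entries in $v_{j_i} L_K(E) v_{j_j}$. On the other side, using Theorem \ref{AMPthm}(1) repeatedly along each hair strand of $G$ produces $L_K(G) \cong \bigoplus_{i=1}^u m_i L_K(G) v_{j_i}$ as left $L_K(G)$-modules, and Proposition \ref{End}(2) again delivers $L_K(G) \cong {\rm End}_{L_K(G)}(L_K(G))$ as the same block matrix ring, now with entries in $v_{j_i} L_K(G) v_{j_j}$. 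The equality displayed above joins these two descriptions and yields ${\rm End}_{L_K(E)}(Q) \cong L_K(G)$. The ``moreover'' clause is then immediate: when $Q$ generates $L_K(E)$-Mod, Proposition \ref{hairprop} forces $T = F^0$, so $F_T = F$ and $G$ is a hair extension of $F$ itself. The main obstacle is the corner identification of the third paragraph; once that is in place, the remainder is essentially bookkeeping of matrix presentations.
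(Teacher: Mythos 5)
Your proof is correct, and it shares the paper's skeleton --- Proposition \ref{hairprop} to obtain $Q \cong \bigoplus_{i=1}^u m_i L_K(E)v_{j_i}$, the same graph $G = F_T^+(m_1,\dots,m_u)$, and Proposition \ref{End}(2) to present both sides as $\sigma \times \sigma$ block matrix rings --- but your endgame is genuinely different from the paper's, and cleaner. The paper decomposes $L_K(G)$ by its vertices, so its matrix entries live in corners $v_i^y L_K(G) v_j^z$ indexed by hair vertices, and it must then build explicit $K$-linear isomorphisms $\varphi_{(m_i,y),(m_j,z)}(v_i r v_j) = p_i^y\, v_i r v_j\, (p_j^z)^*$ (conjugation by the hair paths, using $p_i^y(p_i^y)^* = v_i^y$ from Remark \ref{oneedge}) and verify by hand that these respect matrix multiplication via $(p_\ell^x)^* p_\ell^x = v_\ell$. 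You instead apply Theorem \ref{AMPthm}(1) down each strand of $G$ to replace every summand $L_K(G)v_i^y$ by $L_K(G)v_i$ \emph{before} invoking Proposition \ref{End}(2), so that the two matrix rings have entries in literally identified corners $v_{j_i} L_K(E) v_{j_j} = v_{j_i} L_K(F_T) v_{j_j} = v_{j_i} L_K(G) v_{j_j}$, and multiplicativity becomes automatic: entry products on both sides are computed inside the common subalgebra $L_K(F_T)$, which is a complete subgraph's algebra in both $E$ and $G$ by Proposition \ref{Lpaproperties}(3), with your hereditary-path argument for the corner identity being exactly the observation the paper makes when it checks $v_i r v_j \in L_K(G)$. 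What your route buys is the elimination of the element-level bookkeeping --- the conjugation by hair paths is absorbed into the module isomorphisms underlying Theorem \ref{AMPthm}(1) --- at the mild cost of explicitly needing ${\rm End}_{L_K(G)}(L_K(G)) \cong L_K(G)$, which holds under the paper's convention of writing endomorphisms opposite the scalars (and which the paper uses tacitly anyway). Your side checks (that $F_T$ is totally looped because $T$ is hereditary, and the generator case via the final assertion of Proposition \ref{hairprop}) match the paper's.
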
   

\begin{proof}
By Proposition \ref{hairprop}  we may decompose  $Q$ as $$Q \cong \bigoplus_{v\in T} m_v L_K(E)v,$$
where $T$ is a hereditary subset of $F^0$, and each $m_v \geq 1$.   Write $T = \{v_1, v_2, \dots , v_u\}$.  Note that there are $\sigma = \sum_{v\in T} m_v$ direct summands in the decomposition.      By  Proposition \ref{End}  
$ {\rm End}_{L_K(E)}(Q)$ is isomorphic to a $\sigma \times \sigma$ matrix ring, with entries described as follows.  The indicated matrices   may be viewed as consisting of rectangular blocks of size $m_{v_i} \times m_{v_j}$, where, for $1 \leq i \leq u$, $1\leq j \leq u$, the entries of the $(i,j)$ block are elements of the $K$-vector space   $v_i L_K(E) v_j$.    

On the other hand, because $T$ is hereditary, we may construct the restriction graph $F_T$ of $F$.  
Furthermore, because $m_i \geq 1$ for all $1\leq i \leq u$, we may construct  the hair   extension $G = F_T^+(m_1,m_2, \dots , m_u)$ of $F_T$.    For each $1 \leq i \leq u$, and each $1\leq y \leq m_i$,  let $p_i^y:= e_i^y \cdots e_i^1$ denote the (unique) path in $G = F_T^+$ having  $s(p_i^y) = v_i^y$, and $r(p_i^y) = v_i$.    Note that, because of the specific configuration of the added vertices and edges used to build $G$ as $F_T^+$, repeated application of Remark \ref{oneedge} gives that  $p_i^y (p_i^y)^* = v_i^y$ in $L_K(G)$.      Note also that $|G^0| =   \sum_{1\leq i \leq u} m_i$, which  is precisely $\sigma$.      Writing $L_K(G) = \oplus_{v\in G^0} L_K(G)v$ and again using Proposition \ref{End}, we get that 
$L_K(G)$ is isomorphic to the $\sigma \times \sigma$ matrix ring with entries described as follows.   For $1 \leq i, j \leq u$, and $1 \leq y \leq m_i$, $1 \leq z \leq m_j,$ the entries in the row indexed by $(m_i, y)$ and column indexed by $(m_j, z)$ are elements of $v_i^y L_K(G) v_j^z.$   

We now show that these two $\sigma \times \sigma$ matrix rings are isomorphic as $K$-algebras.  To do so, we show first that for each pair  $(m_i, y)$,  $(m_j, z)$ with $1 \leq i, j \leq u$, and $1 \leq y \leq m_i$, $1 \leq z \leq m_j$, there is a $K$-vector space isomorphism
$$\varphi = \varphi_{(m_i, y), (m_j,z)} :   v_i L_K(E) v_j \rightarrow v_i^y L_K(G) v_j^z.$$ 
      For $r
      \in L_K(E)$ we define
$$ \varphi_{(m_i, y), (m_j,z)}(v_i r v_j) = p_i^yv_i r v_j (p_j^z)^*.$$
Writing $r$ as a sum of elements of the form $k \alpha \beta^*$ with $k\in K$ and $\alpha, \beta \in {\rm Path}(E)$, we have that  $v_i r v_j$ may be viewed as a sum of elements $k\alpha \beta^*$ with $s(\alpha) = v_i$ and $s(\beta)= v_j$.  Because any path which starts in $T$ must have all of its vertices in $T$ (because $T$ is hereditary, and there is no path from $T \subseteq F^0$ to any of the added vertices which yield $G$ as $F_T^+$), we have that the expression $v_i r v_j$ is indeed an element of $L_K(G)$, which in turn yields that $ p_i^yv_i r v_j (p_j^z)^* \in v_i^y L_K(G) v_j^z$.   

 That $\varphi$ is $K$-linear is clear.   Further, $\varphi$ is a monomorphism:  if $p_i^yv_i r v_j (p_j^z)^* = 0$ then multiplying on the left by $(p_i^y)^*$ and on the right by $p_j^z$ yields $v_i r v_j = 0$.   To show  $\varphi$ is surjective:  for $v_i^y s v_j^z \in v_i^y L_K(G) v_j^z$ with $s\in L_K(G)$, define $s' = (p_i^y)^*v_i^y  s v_j^z p_j^z \in v_i L_K(G) v_j$.  But  using the fact that there are no paths from elements of $T$ to any of the newly added vertices which yield $G$ as $(F_T)^+$, we have as above that $s'$ may be viewed as an element of $L_K(E)$.    Then, using the previous observation that $p_i^y (p_i^y)^* = v_i^y$ in $L_K(G)$, we conclude that $\varphi(s') = p_i^y (p_i^y)^* v_i^y s v_j^z p_j^z (p_j^z)^*  = v_i ^y s v_j^z$, and thus $\varphi$ is surjective.

We now define $\Phi$ to be the $K$-space isomorphism between the two matrix rings induced by applying each of the $\varphi_{(m_i, y), (m_j,z)}$ componentwise.   We need only show that these componentwise isomorphisms respect the corresponding matrix multiplications.    But to do so, it suffices to show that the maps behave correctly in each component.    That is, we need only show, for each $m_\ell$ ($1\leq \ell \leq u$)   and each $x$ ($1 \leq x \leq m_\ell$), that 
$$ \varphi_{(m_i, y), (m_\ell,x)}(v_i r v_\ell) \cdot  \varphi_{(m_\ell, x), (m_j,z)}(v_\ell  r' v_j) =  \varphi_{(m_i, y), (m_j,z)}(v_i r v_\ell r'  v_j).$$ 
But this is immediate, as $(p_\ell^x)^* p_\ell^x = v_\ell$ for each $v_\ell \in T$ and $1 \leq x \leq m_\ell.$  

The additional statement follows from the final assertion of Proposition \ref{hairprop}.  
\end{proof}

\begin{rem} In the previous proof, $E$ is an arbitrary hair   extension of $F$, by some sequence of $|F^0|$ integers.  As well,  $G$ is a hair   extension of  a subgraph $F_T$ of $F$, by some sequence of  $|F_T^0|$ integers.    In general there need be no relationship whatsoever between the two sequences of integers.  
\end{rem}

The following example will help illuminate the ideas of Theorem \ref{End(Q)hairthm}.  

\begin{exas}\label{isoexample}
Let $F$ be  graph

\medskip

$$F \ \ \ = \ \ \  \xymatrix{  \bullet^{v_{1}}\ar@(ul,ur) \ar[r]  &  \bullet^{v_2}  \ar@(l,u) \ar@(r,u) \ar@/^.5pc/[l] \ar[r]& \bullet^{v_{3}}  & \bullet^{v_4}\ar@(ul,ur) \ar[l] } \ \ . $$
\noindent
Then $F$ is totally looped (note that $v_3$ is a sink, so no loop is required at $v_3$).  Let $E$ be the hair   extension $F^+(3,1,2,3)$ of $F$, pictured here: 

$$\xymatrix{  \bullet^{v_{1}}\ar@(ul,ur) \ar[r]  &  \bullet^{v_2}  \ar@(l,u) \ar@(r,u) \ar@/^.5pc/[l] \ar[r]& \bullet^{v_{3}}  & \bullet^{v_4}\ar@(ul,ur) \ar[l] \\
\hspace{-.5in} E \ \    = \ \ \ \ \  \bullet^{v_1^1} \ar[u] & & \bullet^{v_3^1} \ar[u] & \bullet^{v_4^1} \ar[u]  \\
\bullet^{v_1^2}  \ar[u] & & & \bullet^{v_4^2} \ar[u]  } $$
Let $R$ denote $L_K(E)$.  Consider the (arbitrarily chosen) nonzero finitely generated projective left $R$-module $Q = Rv_1^2$.   We write $Q$ in the form indicated in Proposition \ref{hairprop}, as follows.  Using the isomorphism of Theorem  \ref{AMPthm}(1) multiple times, we have
$$Q   \cong Rv^1_1\cong Rv_1 \cong Rv_1 \oplus Rv_2 \cong Rv_1 \oplus (Rv_1 \oplus 2Rv_2 \oplus Rv_3) \cong 2Rv_1 \oplus 2 Rv_2 \oplus Rv_3.$$

The hereditary subset of $F^0$ corresponding to $Q$ is $T = \{v_1, v_2, v_3\}$, so

$$F_T \ \   \ =  \ \   \ \xymatrix{  \bullet^{v_{1}}\ar@(ul,ur) \ar[r]  &  \bullet^{v_2}  \ar@(l,u) \ar@(r,u) \ar@/^.5pc/[l] \ar[r]& \bullet^{v_{3}} & }   .$$
The decomposition $$Q \cong 2Rv_1 \oplus 2Rv_2 \oplus Rv_3$$ dictates that we construct the hair   extension $G = F_T^+(2,2,1)$ of $F_T$, graphically,

\medskip

$$G   \ \   \  =   \ \   \  \xymatrix{  \bullet^{v_{1}}\ar@(ul,ur) \ar[r]  &  \bullet^{v_2}  \ar@(l,u) \ar@(r,u) \ar@/^.5pc/[l] \ar[r]& \bullet^{v_{3}}  \\
\bullet^{v_1^1} \ar[u] &\bullet^{v_2^1} \ar[u] &   
 }   \ \   . $$
By Proposition \ref{hairprop} we have  ${\rm End}_{L_K(E)}(Q) \cong L_K(G)$.   For notational simplification, let $S$ denote $L_K(G)$.  Then   the explicit description of these two algebras as matrix rings as described in the proof of  Proposition \ref{hairprop} is:
\medskip
\footnotesize
$$   \left(\begin{tabular}{ccccc} 
$v_1Rv_1$&$v_1Rv^1_1$&$v_1Rv_2$&$v_1Rv_2^1$&$v_1Rv_3$\\
$v_1^1Rv_1$&$v_1^1Rv^1_1$&$v_1^1Rv_2$&$v_1^1Rv_2^1$&$v_1^1Rv_3$\\
$v_2Rv_1$&$v_2Rv^1_1$&$v_2Rv_2$&$v_2Rv_2^1$&$v_2Rv_3$\\
$v_2^1Rv_1$&$v_2^1Rv^1_1$&$v_2^1Rv_2$&$v_2^1Rv_2^1$&$v_2^1Rv_3$\\
$v_3Rv_1$&$v_3Rv^1_1$&$v_3Rv_2$&$v_3Rv_2^1$&$v_3Rv_3$\\
\end{tabular}\right) 
\cong
 \left(\begin{tabular}{ccccc} 
$v_1Sv_1$&$v_1Sv_1$&$v_1Sv_2$&$v_1Sv_2$&$v_1Sv_3$\\
$v_1Sv_1$&$v_1Sv_1$&$v_1Sv_2$&$v_1Sv_2$&$v_1Sv_3$\\
$v_2Sv_1$&$v_2Sv_1$&$v_2Sv_2$&$v_2Sv_2$&$v_2Sv_3$\\
$v_2Sv_1$&$v_2Sv_1$&$v_2Sv_2$&$v_2Sv_2$&$v_2Sv_3$\\
$v_3Sv_1$&$v_3Sv_1$&$v_3Sv_2$&$v_3Sv_2$&$v_3Sv_3$
\end{tabular}\right)  .$$
\smallskip

\normalsize

\end{exas}

\begin{cor}\label{haircor}   Let $E$ be a graph which arises as a hair   extension of a totally looped finite graph $F$.   Let $\varepsilon$ be a nonzero idempotent in $L_K(E)$.  Then the corner algebra $\varepsilon L_K(E) \varepsilon$ is isomorphic to a Leavitt path algebra. 
 More specifically, $\varepsilon L_K(E) \varepsilon$  is isomorphic to the Leavitt path algebra of a hair extension of $F_T$, 
 where   $F_T$ is the (totally looped) restriction graph of $F$ to some hereditary subset $T$ of $F^0$.

Moreover, in case $\varepsilon$ is a full idempotent in $L_K(E)$, then $\varepsilon L_K(E) \varepsilon$ is isomorphic the Leavitt path algebra of a hair extension of $F$ itself.   
\end{cor}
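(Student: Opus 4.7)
The plan is to reduce the corollary to Theorem~\ref{End(Q)hairthm} by realizing the corner $\varepsilon L_K(E) \varepsilon$ as the endomorphism ring of the finitely generated projective module $L_K(E)\varepsilon$. Setting $R := L_K(E)$ and $Q := R\varepsilon$, the decomposition $R = R\varepsilon \oplus R(1-\varepsilon)$ shows that $Q$ is a nonzero finitely generated projective left $R$-module. Because the paper writes endomorphisms on the right, the abelian-group isomorphism of Proposition~\ref{End}(1) refines to a $K$-algebra isomorphism
\[
\varepsilon L_K(E) \varepsilon \ \cong \ {\rm End}_{L_K(E)}(R\varepsilon),
\]
under which $\varepsilon r \varepsilon$ corresponds to the right-multiplication operator $x \mapsto x \cdot \varepsilon r \varepsilon$.

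Next I would apply Theorem~\ref{End(Q)hairthm} directly to $Q$. Since $E = F^+$ for the totally looped finite graph $F$, that theorem supplies a hereditary subset $T \subseteq F^0$ and a hair extension $G$ of the restriction graph $F_T$ for which ${\rm End}_R(Q) \cong L_K(G)$. I would then pause to verify that $F_T$ is itself totally looped, as the corollary's first assertion claims: any regular vertex $v \in T$ of $F_T$ remains regular in $F$ (by hereditariness of $T$, the $F$-edges out of $v$ all land in $T$ and therefore persist in $F_T$), and so by the totally-looped hypothesis on $F$ admits a loop in $F$; since that loop sits at the single vertex $v \in T$, it survives in $F_T$. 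Concatenating the two isomorphisms establishes the first half of the corollary.

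For the "moreover" clause I would invoke the classical equivalence that $R\varepsilon$ is a generator of $R$-Mod if and only if $\varepsilon$ is a full idempotent. The forward direction is immediate: an equation $1 = \sum_{i=1}^{n} r_i \varepsilon s_i$ exhibits a split epimorphism $n\cdot R\varepsilon \twoheadrightarrow R$. Conversely, any such split epimorphism, interpreted through the identification ${\rm Hom}_R(R\varepsilon, R) \cong \varepsilon R$ of Proposition~\ref{End}(1), writes $1$ as a finite sum of elements of $R\varepsilon R$. Granting this equivalence, the additional assertion of Theorem~\ref{End(Q)hairthm} (namely that $T = F^0$ once $Q$ is a generator) forces $F_T = F$, so $G$ is a hair extension of $F$ itself, completing the proof. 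I expect the only steps requiring any care to be the totally-looped check for $F_T$ and the full-versus-generator equivalence; both are essentially one-liners, so no genuine obstacle is anticipated — the entire substance of the corollary has already been packed into Theorem~\ref{End(Q)hairthm}.
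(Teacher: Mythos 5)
Your proposal is correct and follows essentially the same route as the paper's proof: apply Theorem~\ref{End(Q)hairthm} to $Q = L_K(E)\varepsilon$ via the identification $\varepsilon L_K(E)\varepsilon \cong {\rm End}_{L_K(E)}(L_K(E)\varepsilon)$, and settle the full-idempotent case by the equivalence ``$\varepsilon$ is full if and only if $L_K(E)\varepsilon$ is a generator,'' which feeds into the generator clause of Proposition~\ref{hairprop}. Your additional verifications (that $F_T$ is totally looped, and the proof of the full-versus-generator equivalence) are details the paper leaves implicit, and both are carried out correctly.
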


\begin{proof}
The first statement follows directly from Theorem \ref{End(Q)hairthm}, as $L_K(E)\varepsilon$ is a nonzero finitely generated projective left $L_K(E)$-module, and ${\rm End}_{L_K(E)}(L_K(E)\varepsilon) \cong \varepsilon L_K(E) \varepsilon.$    The statement about full idempotents follows from Proposition \ref{hairprop}, because an idempotent $f$ in a ring $R$ is full  precisely when $Rf$ is a generator for $R$-Mod.  
\end{proof}

There is a specific hair extension  construction for arbitrary graphs which is already known, and which will be useful in establishing our main result.

\begin{defn}[{\cite[Definition 9.1]{at:iameoga}}]\label{ATdef}
For any finite graph $E$ and  positive integer $n$, let $M_nE$ denote the hair extension  graph 
$$M_nE = E^+(n,n,\dots,n).$$ 
In other words, $M_nE$ is constructed from $E$ by attaching a strand of hair of length $n-1$ of the form
$$\xymatrix{  \bullet^{v^{n-1}} \ar[r] & \cdots \bullet^{v^2} \ar[r] & \bullet^{v^{1}} \ar[r] &   } $$ to   each $v\in E^0$.
\end{defn}
%
%

\begin{prop}[{\cite[Proposition 9.3]{at:iameoga}}]\label{ATprop}
Let $K$ be any field, $E$ a finite graph and $n$ a positive integer. Then
there exists a $K$-algebra isomorphism $$\varphi: M_n(L_K(E))\longrightarrow L_K(M_nE).$$   In particular, any full $n\times n$ matrix ring over a Leavitt path algebra is isomorphic to the Leavitt path algebra of a hair   extension of $E$.  
\end{prop}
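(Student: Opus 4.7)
The plan is to construct a set of $n \times n$ matrix units $\{\tilde{E}_{ij}\}_{1 \leq i,j \leq n}$ in $L_K(M_nE)$ that sum to the identity, and then to identify the corner $\tilde{E}_{11}L_K(M_nE)\tilde{E}_{11}$ with $L_K(E)$. Once both are established, the standard matrix-units criterion (see, e.g., \cite[Section 17]{Lam}) will deliver
$$L_K(M_nE) \cong M_n(\tilde{E}_{11}L_K(M_nE)\tilde{E}_{11}) \cong M_n(L_K(E)).$$
The main obstacle will be the identification of the corner, which requires a careful analysis of which paths of $M_nE$ begin and end in $E^0$.

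For the matrix units, for each $v \in E^0$ and $0 \leq j \leq n-1$ I will introduce the path $\mu_v^j := e_v^j e_v^{j-1} \cdots e_v^1$ of length $j$ along the strand of hair at $v$, terminating at $v$ (with $\mu_v^0 := v$). Since each added vertex $v^j$ ($1 \leq j \leq n-1$) satisfies $s^{-1}(v^j) = \{e_v^j\}$, Remark \ref{oneedge} applied repeatedly yields $\mu_v^j (\mu_v^j)^* = v^j$, while a direct computation using the defining relations of $L_K(M_nE)$ gives $(\mu_v^a)^* \mu_w^b = \delta_{v,w}\, \delta_{a,b}\, v$ for all $v, w \in E^0$ and $0 \leq a, b \leq n-1$. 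Setting
$$\tilde{E}_{ij} := \sum_{v \in E^0} \mu_v^{i-1} (\mu_v^{j-1})^* \qquad (1 \leq i, j \leq n),$$
these identities immediately yield the matrix-unit relations $\tilde{E}_{ij} \tilde{E}_{k\ell} = \delta_{jk} \tilde{E}_{i\ell}$ together with
$$\sum_{i=1}^n \tilde{E}_{ii} \ = \ \sum_{v \in E^0} \sum_{j=0}^{n-1} v^j \ = \ 1_{L_K(M_nE)}.$$

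The crux is to show $\tilde{E}_{11} L_K(M_nE) \tilde{E}_{11} = L_K(E)$ as subalgebras of $L_K(M_nE)$. Since $E$ is a complete subgraph of $M_nE$ (Remark \ref{haircompleterem}), Proposition \ref{Lpaproperties}(3) provides the inclusion $L_K(E) \subseteq L_K(M_nE)$; and since $\tilde{E}_{11} = \sum_{v \in E^0} v$ is the identity of $L_K(E)$, the containment $L_K(E) \subseteq \tilde{E}_{11} L_K(M_nE) \tilde{E}_{11}$ is immediate. For the reverse, I will use the monomial spanning set $\{\alpha \beta^* : \alpha, \beta \in {\rm Path}(M_nE),\, r(\alpha) = r(\beta)\}$ of $L_K(M_nE)$ and observe that $\tilde{E}_{11} \alpha \beta^* \tilde{E}_{11}$ is nonzero precisely when $s(\alpha), s(\beta) \in E^0$. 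The key claim is that any path $\alpha$ in $M_nE$ with $s(\alpha) \in E^0$ in fact lies in ${\rm Path}(E)$: if $\alpha$ contained some added edge $e_v^j$ then it would visit the added vertex $v^j$, and because each $v^j$ ($1 \leq j \leq n-1$) has in-degree at most one in $M_nE$ (namely $e_v^{j+1}$ when $j < n-1$, and no incoming edge when $j = n-1$), a finite descent through predecessors forces $\alpha$ to originate at the source $v^{n-1} \notin E^0$, contradicting the hypothesis. Combined with the analogous statement for $\beta$, this gives $\alpha \beta^* \in L_K(E)$ and completes the reverse inclusion. The ``in particular'' clause is then immediate, since $M_nE$ is by Definition \ref{hairdef} a hair extension of $E$.
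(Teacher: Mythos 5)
Your proof is correct, and it is essentially the argument behind the result the paper merely cites (\cite[Proposition 9.3]{at:iameoga}): the hair paths $\mu_v^j$ give a complete set of $n\times n$ matrix units in $L_K(M_nE)$ whose $(1,1)$-corner is exactly $L_K(E)$, which is the standard route to this isomorphism. One cosmetic slip: in your descent argument the backward trace through the strand may terminate at the first edge of $\alpha$ before reaching the top, so the contradiction is that $s(\alpha)$ is some added vertex $v^{j'}$ (not necessarily $v^{n-1}$) --- but in every case $s(\alpha)\notin E^0$, which is all that is needed.
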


 For each positive integer $n$ we denote by $A_n$ the ``straight line graph" having $n$ vertices and $n-1$ edges:
$$A_n \ \ = \ \ \xymatrix{  \bullet^{v_{n-1}} \ar[r] & \bullet^{v_{n-2}} \ar[r] & \cdots \bullet^{v_2} \ar[r] & \bullet^{v_{1}} \ar[r] & \bullet^{v_0}  }. $$
 
 Easily (or, applying Proposition \ref{ATprop}  to $E_{triv}$) we get 
 
 \begin{lem}\label{AnLemma}  For any positive integer $n$, $L_K(A_n) \cong M_n(K)$. 
 \end{lem}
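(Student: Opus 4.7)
The plan is to invoke Proposition \ref{ATprop} with $E = E_{triv}$. Recall that $E_{triv}$ is the one-vertex, no-edge graph, and by Proposition \ref{Lpaproperties}(1) we have $L_K(E_{triv}) \cong K$. By Definition \ref{ATdef}, $M_n E_{triv}$ is the hair extension $E_{triv}^+(n)$, which attaches a strand of hair of length $n-1$ to the unique vertex of $E_{triv}$. Unwinding Definition \ref{hairdef}, this is precisely the graph $A_n$ (with the vertex of $E_{triv}$ playing the role of the sink $v_0$ and the added strand giving the vertices $v_1, \ldots, v_{n-1}$ and edges $v_i \to v_{i-1}$). Then Proposition \ref{ATprop} yields the chain of $K$-algebra isomorphisms
\[
L_K(A_n) \;=\; L_K(M_n E_{triv}) \;\cong\; M_n(L_K(E_{triv})) \;\cong\; M_n(K),
\]
completing the proof.

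As the parenthetical in the lemma statement indicates, a direct proof is also immediate: since $A_n$ has no cycles, one can exhibit an explicit isomorphism by sending $p_i p_j^*$ to the matrix unit $E_{i+1,j+1}$, where $p_i$ denotes the unique path from $v_i$ to the sink $v_0$ (and $p_0 := v_0$). The defining relations of $L_K(A_n)$ translate directly into the matrix unit relations $E_{ij}E_{k\ell} = \delta_{jk}E_{i\ell}$, and the spanning result for $L_K(E)$ mentioned just after the definition of the Leavitt path algebra (together with the fact that every path in $A_n$ ends at some $v_j$ and is of the form $p_i$ for a unique $i$) gives surjectivity. There is no real obstacle here; the only point to check carefully is that the combinatorics of paths in $A_n$ force every spanning monomial $pq^*$ to take the form $p_i p_j^*$, which is clear since the unique maximal path in $A_n$ is $v_{n-1} \to \cdots \to v_0$.
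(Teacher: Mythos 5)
Your proposal is correct and follows exactly the route the paper indicates: the paper's one-line proof is precisely ``apply Proposition \ref{ATprop} to $E_{triv}$,'' which is your primary argument, and your identification of $M_nE_{triv}$ with $A_n$ is the only detail that needed checking. Your supplementary direct argument is also fine, though note that rewriting a monomial $pq^*$ with $r(p)=r(q)=v_j$, $j\geq 1$, in the form $p_ip_j^*$ uses the relation $e_je_j^*=v_j$ from Remark \ref{oneedge} (extending both paths down to the sink $v_0$), not merely the combinatorics of paths.
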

 
We are finally in position to achieve the main result of this article.   

\begin{thm}\label{cornerthm}
Let $K$ be any field.   Let $E$ be any finite graph, and let $\varep$ be any nonzero idempotent in $L_K(E)$.   Then the corner $\varep L_K(E) \varep$ of $L_K(E)$  is isomorphic to a Leavitt path algebra.  
\end{thm}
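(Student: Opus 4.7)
The plan is to use Theorem \ref{decompthm} to reduce the problem to the hair-extension setting of Corollary \ref{haircor}. By that theorem there is a $K$-algebra isomorphism
$$L_K(E) \;\cong\; \bigoplus_{i=1}^{k} M_{m_i}(K) \;\oplus\; p M_n(L_K(F)) p,$$
where $F$ is a finite totally looped graph and $p$ is a full idempotent of $M_n(L_K(F))$. Under this identification, $\varepsilon$ corresponds to a tuple $(\varepsilon_1,\dots,\varepsilon_k,\varepsilon_F)$, with each $\varepsilon_i$ an idempotent of $M_{m_i}(K)$ and $\varepsilon_F$ an idempotent of $pM_n(L_K(F))p$; since $\varepsilon \neq 0$, at least one component is nonzero. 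Taking corners respects ring direct sums, so
$$\varepsilon L_K(E) \varepsilon \;\cong\; \bigoplus_{i=1}^{k} \varepsilon_i M_{m_i}(K) \varepsilon_i \;\oplus\; \varepsilon_F \bigl(p M_n(L_K(F)) p\bigr) \varepsilon_F.$$

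I would then handle each summand. For each $i$, because $K$ is a field every idempotent of $M_{m_i}(K)$ is conjugate to a diagonal one, so $\varepsilon_i M_{m_i}(K)\varepsilon_i \cong M_{s_i}(K)$ with $s_i = \operatorname{rank}(\varepsilon_i)$; when $s_i \geq 1$, Lemma \ref{AnLemma} identifies this corner with $L_K(A_{s_i})$, and summands with $s_i = 0$ are simply discarded. For the $F$-summand, the relations $p\varepsilon_F = \varepsilon_F = \varepsilon_F p$ give the corner-of-a-corner identity
$$\varepsilon_F \bigl(pM_n(L_K(F))p\bigr) \varepsilon_F \;=\; \varepsilon_F M_n(L_K(F)) \varepsilon_F.$$
Transferring $\varepsilon_F$ across the isomorphism $\varphi : M_n(L_K(F)) \to L_K(M_n F)$ supplied by Proposition \ref{ATprop} places this corner inside $L_K(M_n F)$, which is the Leavitt path algebra of a hair extension of the totally looped graph $F$. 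Corollary \ref{haircor} then yields $\varphi(\varepsilon_F) L_K(M_n F) \varphi(\varepsilon_F) \cong L_K(H)$ for an appropriate graph $H$.

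Assembling the pieces, $\varepsilon L_K(E) \varepsilon$ becomes a finite ring direct sum of Leavitt path algebras $L_K(A_{s_i})$ (for those $s_i \geq 1$) together with $L_K(H)$. Proposition \ref{Lpaproperties}(2) then rewrites this as $L_K(G)$, where $G$ is the disjoint union of the corresponding graphs, completing the proof. The only substantive step is the invocation of Corollary \ref{haircor}; everything else is routine bookkeeping. The genuine obstacle has, in fact, already been surmounted earlier in the paper: Theorem \ref{decompthm} reduces an arbitrary finite graph (up to matrix summands) to a hair extension of a totally looped one, and Corollary \ref{haircor}, which rests on Proposition \ref{hairprop}, handles corners in that more tractable setting.
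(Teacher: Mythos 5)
Your proposal is correct and follows essentially the same route as the paper: Theorem \ref{decompthm} to split off the matrix summands, Proposition \ref{ATprop} to move into $L_K(M_nF)$, Corollary \ref{haircor} to resolve the corner there, and reassembly via Lemma \ref{AnLemma} and Proposition \ref{Lpaproperties}(2). The only (harmless) deviation is that you collapse the double corner $\varepsilon_F\bigl(pM_n(L_K(F))p\bigr)\varepsilon_F = \varepsilon_F M_n(L_K(F))\varepsilon_F$ and invoke Corollary \ref{haircor} just once, whereas the paper first converts $pM_n(L_K(F))p$ into $L_K(G_1)$ using the full-idempotent case of that corollary and then applies it a second time to $\epsilon L_K(G_1)\epsilon$ --- just note explicitly, as you did for the matrix blocks, that the $F$-summand is discarded when $\varepsilon_F = 0$.
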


\begin{proof}
By Theorem \ref{decompthm} we have   
$$L_K(E) \cong (M_{m_1}(K)\oplus M_{m_2}(K)\oplus\cdots\oplus M_{m_k}(K))\oplus pM_n(L_K(F))p$$
for some $k\geq 0$, integers $m_1, \dots , m_k$, and some  full idempotent $p$ in $M_n(L_K(F))$ for some positive integer $n$ and totally looped graph $F$.   By Proposition \ref{ATprop}  $M_n(L_K(F)) \cong L_K(M_nF)$.   Let $\gamma$ denote the image of $p$ under this isomorphism; so $\gamma$ is a full idempotent in $L_K(M_nF)$.  Then as $K$-algebras we have 
$$L_K(E) \cong (M_{m_1}(K)\oplus M_{m_2}(K)\oplus\cdots\oplus M_{m_k}(K))\oplus \gamma L_K(M_nF) \gamma.$$
Since $M_nF = F^+(n,n, \dots , n)$ is a hair   extension of the totally looped graph $F$, Corollary \ref{haircor}  yields that $\gamma L_K(M_nF) \gamma \cong L_K(G_1)$ for some finite graph $G_1$, where $G_1$ is a hair   extension of $F$.   So we get
$$L_K(E) \cong (M_{m_1}(K)\oplus M_{m_2}(K)\oplus\cdots\oplus M_{m_k}(K))\oplus L_K(G_1);$$
denote this $K$-algebra isomorphism by $\Phi$.   
Write  $\Phi(\varep) = (\epsilon_1, \epsilon_2, \dots , \epsilon_k, \epsilon)$; then each of $\epsilon$ and $\epsilon_i$ ($1\leq i \leq k$) is an idempotent.  Reordering if necessary, we may eliminate any summand for which $\epsilon_i = 0$,  and thereby get 
$$\varep L_K(E) \varep \cong (\epsilon_1 M_{m_1}(K)\epsilon_1 \oplus \epsilon_2 M_{m_2}(K)\epsilon_2 \oplus\cdots\oplus \epsilon_\ell M_{m_\ell}(K)\epsilon_\ell)\oplus \epsilon L_K(G_1) \epsilon$$
for some $\ell \leq k$.     If $\epsilon = 0$ then we eliminate the summand $\epsilon L_K(G_1)\epsilon$; otherwise,    again invoking Corollary \ref{haircor}, we have that  $ \epsilon L_K(G_1) \epsilon \cong L_K(G)$ for some graph $G$.  
Thus we have 
$$\varep L_K(E) \varep \cong (\epsilon_1 M_{m_1}(K)\epsilon_1 \oplus \epsilon_2 M_{m_2}(K)\epsilon_2 \oplus\cdots\oplus \epsilon_\ell M_{m_\ell}(K)\epsilon_\ell)\oplus L_K(G).  $$
It is well-known that any corner of a full matrix ring over a field $K$ is isomorphic to a full matrix ring (possibly of smaller size) over $K$.   So 
$$\varep L_K(E) \varep \cong (M_{n_1}(K) \oplus M_{n_2}(K) \oplus\cdots\oplus M_{n_\ell}(K))\oplus L_K(G) $$
for some integers $1 \leq n_i \leq m_i$ ($1\leq i \leq \ell$).   
Since $M_t(K) \cong L_K(A_t)$ for any positive integer $t$ (Lemma \ref{AnLemma}), this last isomorphism with Proposition \ref{Lpaproperties}(2) yields 
$$ \varep L_K(E) \varep \cong L_K( A_{n_1} \sqcup A_{n_2} \sqcup \cdots \sqcup A_{n_\ell} \sqcup G ),$$
thus establishing the result.
\end{proof}

\begin{cor}\label{Moritacor}
Let $K$ be any field.  Let $A$ be a $K$-algebra which is Morita equivalent to a Leavitt path algebra.  Then $A$ is isomorphic to a Leavitt path algebra.
\end{cor}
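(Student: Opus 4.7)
The plan is to chain together Morita's Theorem, the hair extension trick of Proposition \ref{ATprop}, and the main result Theorem \ref{cornerthm}. By hypothesis $A$ is Morita equivalent to $L_K(E)$ for some finite graph $E$ (unital Leavitt path algebras correspond to graphs with finitely many vertices, which we are taking to be finite throughout this paper). By Theorem \ref{Moritaresults}(1) there exist a positive integer $n$ and a full idempotent $f \in M_n(L_K(E))$ such that
\[ A \cong f M_n(L_K(E)) f \]
as $K$-algebras.

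Next I would invoke Proposition \ref{ATprop}, which supplies a $K$-algebra isomorphism $\varphi: M_n(L_K(E)) \to L_K(M_nE)$, where $M_nE$ is the hair extension $E^+(n,n,\dots,n)$ of $E$; in particular $M_nE$ is a finite graph. Let $\varep := \varphi(f)$; this is a nonzero idempotent in $L_K(M_nE)$ (and in fact is full, though we will not need the fullness here). Restricting $\varphi$ to $fM_n(L_K(E))f$ yields a $K$-algebra isomorphism
\[ A \cong f M_n(L_K(E)) f \cong \varep L_K(M_nE) \varep. \]
So $A$ is isomorphic to a corner of the Leavitt path algebra of the finite graph $M_nE$.

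Now I would apply Theorem \ref{cornerthm} to the corner $\varep L_K(M_nE)\varep$, which yields that this corner is itself isomorphic to $L_K(G)$ for some finite graph $G$. Composing the isomorphisms gives $A \cong L_K(G)$, as required.

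There is really no obstacle here: every ingredient is already in place. Morita equivalence is converted into a corner of a matrix ring by Morita's Theorem; the matrix ring is converted into a Leavitt path algebra (of a hair extension) by Proposition \ref{ATprop}; and the corner of that Leavitt path algebra is converted into a Leavitt path algebra by Theorem \ref{cornerthm}. The conceptual payoff is that Theorem \ref{cornerthm} is strong enough to absorb both the ``matrix ring over $L_K(E)$'' part (via Proposition \ref{ATprop}) and the ``corner by $f$'' part in a single application to $\varep L_K(M_nE)\varep$.
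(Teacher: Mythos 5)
Your proposal is correct and follows essentially the same route as the paper's own proof: Theorem \ref{Moritaresults}(1) to realize $A$ as a full corner $fM_n(L_K(E))f$, Proposition \ref{ATprop} to transport that corner to $\varep L_K(M_nE)\varep$ via the hair extension $M_nE$, and a single application of Theorem \ref{cornerthm} to conclude. The only (harmless) additions are your explicit naming of the idempotent $\varep = \varphi(f)$ and the remark that fullness is not needed at the final step, both of which are implicit in the paper's argument.
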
 

\begin{proof}
If $A$ is Morita equivalent to $L_K(E)$, then (see Theorem \ref{Moritaresults}(1)) there exists a positive integer $n$ and a (full) idempotent $p\in M_n(L_K(E))$ for which $A \cong p M_n(L_K(E)) p$.  But using Proposition \ref{ATprop}, we have $p M_n(L_K(E)) p \cong \varep L_K(M_nE) \varep$ for some idempotent $\varep \in L_K(M_nE)$.      Finally, we invoke Theorem \ref{cornerthm}  to get the desired result.  
\end{proof}

Although the following result seems on the surface to be a generalization of Theorem \ref{cornerthm}, this result in fact follows as a consequence of Theorem \ref{cornerthm}.  
\begin{thm}\label{End(Q)general}
Let $K$ be any field.  Let $E$ be any finite graph, and let $Q$ be a nonzero finitely generated projective left $L_K(E)$-module.   Then ${\rm End}_{L_K(E)}(Q)$ is isomorphic to a Leavitt path algebra.  
\end{thm}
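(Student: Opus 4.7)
The plan is to reduce Theorem \ref{End(Q)general} to the already-established Theorem \ref{cornerthm} via standard Morita-theoretic machinery. Since $Q$ is a nonzero finitely generated left $L_K(E)$-module, it is generated by some positive integer $n$ of elements. By Theorem \ref{Moritaresults}(3), there exists an idempotent $q \in M_n(L_K(E))$ such that
$${\rm End}_{L_K(E)}(Q) \cong {\rm End}_{M_n(L_K(E))}(M_n(L_K(E)) q).$$
Upgrading Proposition \ref{End}(1) to a ring isomorphism in the case $e = f$ (the standard fact that ${\rm End}_R(Re) \cong eRe$ as rings, with multiplication on the endomorphism side corresponding to multiplication on the idempotent side, a routine check), we get
$${\rm End}_{L_K(E)}(Q) \cong q M_n(L_K(E)) q.$$

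Next, I would invoke Proposition \ref{ATprop} to obtain a $K$-algebra isomorphism $\Psi : M_n(L_K(E)) \xrightarrow{\ \cong\ } L_K(M_n E)$. Let $\varep := \Psi(q)$; this is a nonzero idempotent in $L_K(M_n E)$ (it is nonzero because $Q$ is nonzero, so $q \neq 0$). Restricting $\Psi$ to the corner $q M_n(L_K(E)) q$ gives
$$q M_n(L_K(E)) q \ \cong \ \varep L_K(M_n E) \varep.$$

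Finally, since $M_n E$ is a finite graph and $\varep$ is a nonzero idempotent in $L_K(M_n E)$, Theorem \ref{cornerthm} applies directly to yield a finite graph $G$ for which $\varep L_K(M_n E) \varep \cong L_K(G)$. Chaining the isomorphisms produces ${\rm End}_{L_K(E)}(Q) \cong L_K(G)$, as required. There is no real obstacle here: the substantive content is carried entirely by Theorem \ref{cornerthm}, and Proposition \ref{ATprop} together with Theorem \ref{Moritaresults}(3) are precisely the tools needed to convert the endomorphism-ring statement into a corner statement over a Leavitt path algebra of a (larger) graph.
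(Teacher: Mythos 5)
Your proposal is correct and follows essentially the same route as the paper's own proof: both reduce via Theorem \ref{Moritaresults}(3) to an endomorphism ring of a cyclic projective over $M_n(L_K(E))$, transport through the isomorphism of Proposition \ref{ATprop} to a corner $\varep L_K(M_nE)\varep$, and then invoke Theorem \ref{cornerthm}. The only (immaterial) difference is that you identify ${\rm End}_{M_n(L_K(E))}(M_n(L_K(E))q)$ with the corner $qM_n(L_K(E))q$ before applying Proposition \ref{ATprop}, whereas the paper transports the endomorphism ring first and takes the corner afterwards.
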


\begin{proof}
   Suppose $Q$ is generated by $n$ elements as an $L_K(E)$-module.   Then by Theorem \ref{Moritaresults}(3), under the standard Morita equivalence $\Psi$ between $L_K(E)-Mod$ and $M_n(L_K(E))-Mod $,  $\Psi (Q) $ is isomorphic to a direct summand of $M_n(L_K(E))$, i.e., $\Psi (Q) \cong M_n(L_K(E))q $ for some idempotent $q\in M_n(L_K(E))$.   But the equivalence yields that $${\rm End}_{L_K(E)}(Q) \cong {\rm End}_{M_n(L_K(E))}(\Psi (Q) ) \cong {\rm End}_{M_n(L_K(E))}( M_n(L_K(E))q).$$   This in turn by Proposition \ref{ATprop}   is isomorphic to ${\rm End}_{L_K(M_nE)}( L_K(M_nE) \varep) $ for some idempotent $\varep \in L_K(M_nE)$, which in turn is isomorphic to $ \varep L_K(M_nE) \varep$.  Now Theorem \ref{cornerthm}  gives the result.   
\end{proof}

We close this article with a series of remarks.

\begin{rem}\label{Cstarrem}
There is a tight but not fully understood connection between results about Leavitt path algebras and results about their graph $C^*$-algebra analogs.   The connection continues in this context as well.  Specifically, Arklint and Ruiz \cite{ar:cocka},  and Arklint, Gabe and Ruiz  \cite{ArklintGabeRuiz}  have established (among many other things) that for a finite graph $E$, any corner $pC^*(E)p$ of the graph $C^*$-algebra $C^*(E)$ by a projection $p$ is isomorphic to a graph $C^*$-algebra.   
\end{rem}

\begin{rem}\label{allidempotentsrem}
Our ability to establish Theorem \ref{cornerthm} for {\it all} nonzero idempotents $\varep$ in $L_K(E)$ may seem surprising.  Specifically, we need not assume that $\varep$ possess any additional properties (e.g., that $\varep$ be full, or that $\varep \in L_K(E)_0$, or that $\varep = \varep^*$).  The point here is that by the theorem of  Ara,  Moreno and Pardo (Theorem \ref{AMPthm}(2)), we have a description up to isomorphism of any left $L_K(E)$-module of the form $L_K(E)\varep$ for {\it all} idempotents $\varepsilon$ of $L_K(E)$ in terms of idempotents of the form $L_K(E)v$ where $v\in E^0$.    The foundational result which is used to establish Theorem \ref{AMPthm} is the fact that the Leavitt path algebra $L_K(E)$ may be viewed as the ``Bergman algebra" of the monoid $M_E$, which allows the extremely powerful \cite[Theorem 6.2]{Bergman} to be invoked. 
\end{rem}

\begin{rem}\label{Namideas}
By combining the germane ideas in the proofs of Theorem \ref{cornerthm} and  Corollary \ref{Moritacor}, we can in fact establish a more precise result about algebras which are Morita equivalent to Leavitt path algebras.    Specifically, 
%
let $K$ be any field, $A$ a unital $K$-algebra, $E$ a finite graph, and $F$ a finite graph in which every regular vertex is the base of a loop that is obtained from the graph $E_{sf}$ via some step-by-step process of  collapsing at a regular vertex which is not the base of a loop. Then $A$ is Morita equivalent to $L_K(E)$ if and only if 

\begin{itemize}
\item[(1)] there exists a finite acyclic graph $V$ such that the number of  sinks in $V$ is equal to the number of all those sinks of $E$  which are not in $E^0_{sf}$, 
\item[(2)] there exist a positive integer $k$, and a hereditary subset $H$ of $(M_kF)^0$ containing $F^0$, and 
\item[(3)]  $A$ is isomorphic to $L_K(V \sqcup (M_kF)_{H})$.
\end{itemize}

\end{rem}

%
%
%
%

\vskip 0.5 cm \vskip 0.5cm {

\end{document}